\newtheorem{theorem}{Theorem}[section]
\newtheorem{lemma}[theorem]{Lemma}
\newtheorem{proposition}[theorem]{Proposition}
\theoremstyle{definition}
\newtheorem{remark}[theorem]{Remark}
\DeclareMathOperator{\rk}{\mathrm{rk}}
\DeclareMathOperator{\Det}{\mathsf{Det}}
\newcommand{\id}{\mathrm{id}}
\newcommand{\NA}{\widetilde{A}} %% normalisation of A
\newcommand{\CI}{\mathfrak{c}}  %% conductor ideal
\newcommand{\type}{\mathrm{t}}  %% type
\newcommand{\sing}{\mathsf{sing}}
\DeclareMathOperator{\ev}{\mathrm{ev}}
\DeclareMathOperator{\tors}{\mathsf{tors}}
\DeclareMathOperator{\Ann}{\mathsf{Ann}}
\DeclareMathOperator{\Coh}{\mathsf{Coh}}
\DeclareMathOperator{\Pic}{\mathsf{Pic}}
\DeclareMathOperator{\Mf}{\mathsf{M}}
\DeclareMathOperator{\Hom}{\mathsf{Hom}}
\DeclareMathOperator{\ShHom}{\mathcal{H}\mathsf{om}}
\DeclareMathOperator{\Ext}{\mathsf{Ext}}
\DeclareMathOperator{\ShExt}{\mathcal{E}\mathsf{xt}}
\DeclareMathOperator{\Ans}{\mathsf{An}}
\DeclareMathOperator{\Sets}{\mathsf{Sets}}
\newcommand{\ARG}{\,\cdot\,}
\newcommand{\kk}{\mathbb{C}}%% we work over \CC only
\newcommand{\CC}{\mathbb{C}}
\newcommand{\kA}{\mathcal{A}}
\newcommand{\kB}{\mathcal{B}}
\newcommand{\kC}{\mathcal{C}}
\newcommand{\kF}{\mathcal{F}}
\newcommand{\kG}{\mathcal{G}}
\newcommand{\kO}{\mathcal{O}}
\newcommand{\kL}{\mathcal{L}}
\newcommand{\kP}{\mathcal{P}}
\newcommand{\kT}{\mathcal{T}}
\newcommand{\lar}{\longrightarrow}
\begin{document}

\title[Analytic Moduli]{%
Analytic Moduli Spaces of Simple Sheaves on Families of Integral Curves}

\author{Igor Burban}
\address{%
Universit\"at zu K\"oln,
Mathematisches Institut,
Weyertal 86--90,
D--50931 K\"oln,
Germany
}
\email{burban@math.uni-koeln.de}

\author{Bernd Kreussler}
\address{%
Mary Immaculate College, South Circular Road, Limerick, Ireland
}
\email{bernd.kreussler@mic.ul.ie}

\begin{abstract}
  We prove the existence of fine moduli spaces of simple coherent sheaves on
  families of irreducible curves.
  Our proof is based on the existence of a universal upper bound of the
  Castelnuovo-Mumford regularity of such sheaves, which we provide.
\end{abstract}

\maketitle

\section{Introduction}

In \cite{BK-AMS} and \cite{BuH} a geometric
associative (resp.\ classical) $r$-matrix was associated to any flat analytical
family of integral curves of genus one. This way one can construct families of
solutions to the associative (resp.\ classical) Yang-Baxter equation extending
an earlier approach of \cite{Pol1}.

Both constructions are based on the existence of a fine moduli space of simple
vector bundles on an analytical family of integral curves of arithmetic genus
one.

The result of the present article is more general. We prove the
existence of a fine relative moduli space of simple sheaves on any flat
family of integral projective curves of fixed arithmetic genus with a section.
A coherent sheaf $\kF$ on an integral projective curve $E$ over $\CC$ is
called \emph{simple} if the natural linear map
$\CC\rightarrow\Hom_{E}(\kF,\kF)$ is an isomorphism.

The exact assumptions are the following.

\begin{itemize}
\item  Let $p:X \lar S$ be a flat projective
  morphism of complex spaces of relative dimension one and denote by
  $\breve{X}$ the smooth locus of $p$.
\item  Assume there exists a section   $i: S \lar \breve{X}$ of   $p$.
\item  Suppose  that for all points $s \in S$ the fibre $X_s$ is a reduced and
  irreducible projective curve of fixed arithmetic genus $g$.
\end{itemize}

By $\Ans_S$ we denote the category of (not necessarily separated) complex spaces
over $S$. If $f:T\rightarrow S$ is an object of this category, we obtain a
cartesian diagram
\[
\xymatrix
{
X_T \ar[r]^h \ar[d]_{q}& X \ar[d]^p \\
T \ar[r]^f & S\,.
}
\]
For each $t\in T$, we denote by $X_{t}$ the fibre of $q$ over $t$. If
$\kF\in\Coh(X_{T})$, we denote by $\kF_{t}$ the restriction of
$\kF$ to the fibre $X_{t}$.

Given two coprime integers $n\ge0$ and $d$, we define the functor
$\underline{\Mf}^{(n,d)}_{X/S}: \Ans_S \rightarrow \Sets$ as follows:
\[
\underline{\Mf}^{(n,d)}_{X/S}(T \stackrel{f}\to S) =
\left\{
\kF \in \Coh(X_T)\left|
\begin{array}{l}
\kF \text{ is } T\text{-flat, } \kF_{t} \text{ is simple}\\
\rk(\kF_{t})=n, \chi(\kF_{t})=d \quad \forall t \in T
\end{array}
\right.\right\}/\sim
\]
where $\kF_{1} \sim \kF_{2}$  if and only if there exists $\kL \in \Pic(T)$
such that $\kF_{1} \cong \kF_{2} \otimes q^*(\kL)$. By $\rk(\kF_{t})$ we
denote the rank of $\kF_{t}$ at the generic point of the integral curve $X_{t}$.

Our main result, Theorem \ref{T:main}, states that this functor is
representable by a complex space over $S$. For the proof we use a universal
upper bound for the Castelnuovo-Mumford regularity of simple sheaves with
fixed $(n,d)$, provided in Section \ref{S:vanishing}.

Our result may seem to look well known and familiar at the first glance. And
indeed, in the category of schemes, the representability was shown in the
early 1960s by Grothendieck \cite{G} for $(n,d)=(1,d)$.
For general $(n,d)$ Altman and Kleiman \cite{AK1} showed at the
end of the 1970s representability as an algebraic space, but they considered
the sheafification in the \'etale topology of the functor defined above.

Similarly, in the analytic category, localised (or sheafified) functors were
shown to be representable for $S$ a point by Norton \cite{N} in 1979, in the
case $(n,d)=(1,d)$ for general $S$ by Bingener \cite{B} in 1980 and in general
by Kosarew and Okonek \cite{KO} in 1989. For our application, mentioned at the
beginning, we indeed need the representability of the functor
$\underline{\Mf}^{(n,d)}_{X/S}$ as defined above and not only of the
sheafified version of it. Therefore, we need to prove that this functor is
already a sheaf in the analytic topology; this is the main contribution we
make in this paper.

\section{Preliminaries}
\label{S:prelim}

\subsection{Local invariants}\label{SS:local}
A one-dimensional local Noetherian ring is Cohen-Macaulay if its
maximal ideal contains a non-zero divisor.
Let $(A,\mathfrak{m},k)$ be a local Noetherian domain of dimension one.
Then $A$ is Cohen-Macaulay, $\Hom_{A}(k,A)=0$ and $\Ext^{1}_{A}(k,A)\ne0$. The
number
\[\type(A)=\dim_{k}\Ext^{1}_{A}(k,A)\]
is called the \emph{type} of $A$. The ring $A$ is Gorenstein iff $\type(A)=1$,
see \cite[Thm.\ 3.2.10]{BH}.
Let $A\subset\NA\subset Q(A)$ be the normalisation (integral closure) of $A$
in its field of fractions $Q(A)$. Throughout we assume that $A$ is the
localisation of a $k$-algebra of finite type or $k=\CC$ and $A$ is a local
complex analytic algebra. Such rings are excellent and so by \cite[Theorem
6.5]{D}, $\NA$ is a finite $A$-module, which ensures that $\NA/A$ has finite
length.
The \emph{$\delta$-invariant} of $A$ is the
length of the Artinian $A$-module $\NA/A$, i.e.\
\[\delta(A) = \dim_{k}\bigl(\NA/A\bigr)\,.\]
\begin{lemma}\label{L:type-delta}
  If $A$ is not regular, then $\type(A)\le\delta(A)$.
\end{lemma}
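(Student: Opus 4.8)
The plan is to compute $\type(A)$ as the dimension of a socle in an Artinian quotient and then to fit that socle inside a copy of $\NA/A$. First I would choose a nonzero element $x$ of the conductor $\CI=\{a\in A:a\NA\subseteq A\}=\Ann_A(\NA/A)$. Since $A$ is excellent, $\NA$ is a finite $A$-module, so $\NA/A$ has finite length and $\CI\ne0$; since $A$ is not regular it is not normal (a one-dimensional normal Noetherian local domain is a discrete valuation ring), hence $\NA\ne A$ and $\CI\subsetneq A$, so in fact $\CI\subseteq\mathfrak{m}$. Thus $x$ is a nonzerodivisor lying in $\mathfrak{m}$ with $x\NA\subseteq A$, and $xA\subseteq x\NA\subseteq A$.

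Next, applying $\Hom_A(k,-)$ to the short exact sequence $0\to A\xrightarrow{\,\cdot x\,}A\to A/xA\to0$ and using that $\Hom_A(k,A)=0$ (because $A$ is Cohen--Macaulay of dimension one) together with the fact that multiplication by $x$ annihilates $\Ext^i_A(k,-)$ for all $i$ (as $x\in\mathfrak m=\Ann_A(k)$), I obtain a natural isomorphism $\Ext^1_A(k,A)\cong\Hom_A(k,A/xA)$. The right-hand side is the socle $\bigl(0:_{A/xA}\mathfrak{m}\bigr)$, so $\type(A)=\dim_k\bigl(0:_{A/xA}\mathfrak{m}\bigr)$.

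The key step is to show $\bigl(0:_{A/xA}\mathfrak{m}\bigr)\subseteq x\NA/xA$. Let $a\in A$ satisfy $\mathfrak{m}a\subseteq xA$; then $(a/x)\mathfrak{m}\subseteq A$ inside $Q(A)$, and $(a/x)\mathfrak{m}$ is an ideal of $A$, hence equals $A$ or is contained in $\mathfrak{m}$. If $(a/x)\mathfrak{m}=A$, then $\mathfrak{m}$ is an invertible fractional ideal, which forces $A$ to be a discrete valuation ring, contradicting the hypothesis; therefore $(a/x)\mathfrak{m}\subseteq\mathfrak{m}$, so $a/x$ is integral over $A$, since it maps the finitely generated faithful $A$-module $\mathfrak{m}$ into itself. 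Hence $a/x\in\NA$ and $a\in x\NA$. This is the only place where non-regularity of $A$ is used, and I expect it to be the main obstacle: everything else is formal, but ruling out the alternative $(a/x)\mathfrak{m}=A$ is the crux.

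Finally, since multiplication by $x$ gives an isomorphism $\NA/A\xrightarrow{\ \sim\ }x\NA/xA$ of $A$-modules, I conclude
\[
\type(A)=\dim_k\bigl(0:_{A/xA}\mathfrak{m}\bigr)\le\dim_k\bigl(x\NA/xA\bigr)=\dim_k\bigl(\NA/A\bigr)=\delta(A),
\]
which is the desired inequality.
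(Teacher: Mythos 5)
Your proof is correct. It takes a mildly different route from the paper's: you express $\type(A)$ as the socle dimension of the Artinian quotient $A/xA$, where $x\neq 0$ lies in the conductor, via the isomorphism $\Ext^{1}_{A}(k,A)\cong\Hom_{A}(k,A/xA)$, and then clear denominators by $x$ to place that socle inside $x\NA/xA\cong\NA/A$. The paper instead dualises $0\to\mathfrak{m}\to A\to k\to 0$ to obtain $0\to A\to\Hom_{A}(\mathfrak{m},A)\to\Ext^{1}_{A}(k,A)\to 0$ and identifies $\Hom_{A}(\mathfrak{m},A)$ with $\Lambda=\{\lambda\in Q(A)\mid\lambda\mathfrak{m}\subset\mathfrak{m}\}\subset\NA$, so that $\Ext^{1}_{A}(k,A)$ embeds into $\NA/A$ directly. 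In substance the two arguments hinge on exactly the same two points: (i) if $(a/x)\mathfrak{m}=A$ (in the paper: if an $A$-linear map $\mathfrak{m}\to A$ is surjective), then $\mathfrak{m}$ is principal and $A$ is regular --- this is where non-regularity enters, as you correctly identified as the crux; and (ii) an element of $Q(A)$ carrying the finite faithful module $\mathfrak{m}$ into itself is integral over $A$ (the determinant trick; the paper quotes Atiyah--Macdonald, Prop.\ 2.4, for this). What your packaging buys is that you bypass the paper's isomorphisms $\alpha$ and $\beta$ and rely only on the standard description of the type as the socle dimension of $A/xA$; note also that taking $x$ in the conductor, while convenient for writing $x\NA/xA\subset A/xA$, is not strictly needed --- any non-zero $x\in\mathfrak{m}$ would do, since your argument shows every socle representative $a$ satisfies $a/x\in\NA$, so the socle lies in $(A\cap x\NA)/xA$, which embeds into $\NA/A$ via $a\mapsto a/x$.
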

\begin{proof}
  Applying $\Hom_{A}(\ARG,A)$ to the exact sequence
  $0\rightarrow \mathfrak{m}\rightarrow A \rightarrow k \rightarrow 0$
  yields the exact sequence of $A$-modules
  \[0\lar A\lar \Hom_{A}(\mathfrak{m},A) \lar \Ext^{1}_{A}(k,A)\lar 0\,.\]
  We would like to compare it with the exact sequence
  $0\rightarrow A\rightarrow\NA\rightarrow\NA/A\rightarrow 0$.
  To this end, we are going to establish an
  injective $A$-linear map $\Hom_{A}(\mathfrak{m},A)\rightarrow\NA$.

  Observe first that the maps
  $\alpha:\Lambda=\{\lambda\in Q(A)\mid \lambda\mathfrak{m}\subset\mathfrak{m}\}
  \lar \Hom_{A}(\mathfrak{m},\mathfrak{m})$,
  given by $\alpha(\lambda)(x)=\lambda x$, and
  $\beta:\Hom_{A}(\mathfrak{m},\mathfrak{m}) \lar \Hom_{A}(\mathfrak{m},A)$,
  induced by
  $\mathfrak{m}\subset A$, are both isomorphisms of $A$-modules.
  Indeed, both are injective and, because $Q(A)$ is a
  field, the surjectivity of $\alpha$ follows from
  $\mathfrak{m}\subset\mathfrak{m}\otimes_{A}Q(A)\cong Q(A)$.
  The surjectivity of
  $\beta$ uses that $A$ is not regular as follows. Every $A$-linear map
  $f:\mathfrak{m}\rightarrow A$ is injective (as $A$ is a domain) and if it
  does not factor via $\mathfrak{m}\subset A$, it is surjective. But if $f$ is
  an isomorphism, $\mathfrak{m}$ is a principal ideal, which implies that $A$
  is regular.

  To conclude, note that \cite[Prop.\ 2.4]{AM} implies that $\Lambda\subset
  Q(A)$ is actually contained in $\NA$. Together with the isomorphisms
  $\alpha$ and $\beta$ this gives the desired inclusion of $A$-modules
  $\Hom_{A}(\mathfrak{m},A)\subset\NA$. Using the above exact sequence, this
  inclusion induces an injective $A$-linear map
  $\Ext^{1}_{A}(k,A)\rightarrow \NA/A$, hence
  \(\type(A)\le \delta(A)\,.\)
\end{proof}
\begin{remark}
  If $A$ is regular, $\type(A)=1$ and $\delta(A)=0$.
\end{remark}

Recall that the conductor ideal of $(A,\mathfrak{m},k)$ is
\[\CI=\Ann_{A}\bigl(\NA/A\bigr) = \{\lambda\in A\mid
\lambda\cdot\NA\subset A\}\,.\]
This is an ideal in $A$ as well as in $\NA$, hence $\CI\cdot\NA=\CI$ and so
$\NA/\CI \cong A/\CI\otimes_{A}\NA$. Moreover, the map
$\CI\lar\Hom_{A}(\NA,A)$ which sends $\lambda\in\CI$ to multiplication by
$\lambda$, is an isomorphism of $A$-modules: it is injective as $A$ is a
domain and to see surjectivity we use that $\NA\otimes_{A}Q(A)\cong Q(A)$.
If $A$ is not regular, $\CI\ne A$ and so $\CI\subset\mathfrak{m}$. The
\emph{multiplicity of the conductor} is the length of the Artinian $A$-module
$\NA/\CI$, denoted
\[c(A) = \dim_{k}\NA/\CI\,.\]
Because normalisation and completion are commuting processes for reduced
excellent rings \cite[Theorem 6.5]{D}, the invariants $\type(A)$,
$\delta(A)$ and $c(A)$ are the same for the localisation $A$ of a
$\mathbb{C}$-algebra of finite type and the analytic local rings given by
the same ideal. Therefore, we can use the work of Greuel \cite{GMG} in our
situation to obtain the following inequality.
\begin{lemma}\label{L:GMG}{\rm{\cite[2.4 (d)]{GMG}}}
  If the one-dimensional local Noetherian domain $A$ is the localisation of a
  $\mathbb{C}$-algebra of finite type, then  $c(A)\le 2\delta(A)$.
\end{lemma}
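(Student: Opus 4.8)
The plan is to bring in the canonical module $\omega=\omega_{A}$ of $A$ and to prove the clean identity
\[
\dim_{k}(\omega/A)=2\delta(A)-c(A),
\]
after which $c(A)\le 2\delta(A)$ follows at once, a length being non-negative. (One also reads off that equality holds precisely when $\omega\cong A$, i.e.\ when $A$ is Gorenstein.)

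First I would set up the canonical module. As $A$ is a quotient of a regular local ring --- being the localisation of a finite-type $\CC$-algebra, or an analytic algebra --- it has a canonical module $\omega$; since $A$ is a Cohen-Macaulay domain of dimension one, $\omega$ is maximal Cohen-Macaulay of rank one, hence isomorphic to a fractional ideal of $A$, and by the standard normalisation one may moreover assume $A\subseteq\omega\subseteq\NA$. Write $(\omega:M)=\{x\in Q(A)\mid xM\subseteq\omega\}$ for the realisation of $\Hom_{A}(M,\omega)$ on fractional ideals, so that $(\omega:A)=\omega$. I will use the two standard facts that $(\omega:\omega)=A$ (the endomorphisms of the canonical module are the scalars) and that $\Ext^{i}_{A}(k,\omega)=0$ for $i\neq1$ while $\Ext^{1}_{A}(k,\omega)\cong k$; together with the facts that $\omega$ has injective dimension one and is torsion-free, the latter shows by d\'evissage that on finite-length $A$-modules the functor $\Ext^{1}_{A}(\ARG,\omega)$ is exact and length-preserving, while $\Hom_{A}(\ARG,\omega)$ annihilates every torsion module.

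The decisive point, and the one I expect to carry the content, is the identification $(\omega:\NA)=\CI$. On the one hand, $\CI\subseteq A\subseteq\omega$ is an ideal of $\NA$, so $\CI=\CI\cdot\NA\subseteq\omega$ and hence $\CI\subseteq(\omega:\NA)$. On the other hand, $(\omega:\NA)$ is itself an ideal of $\NA$, and since $\omega\subseteq\NA$ we have $(\omega:\NA)\subseteq(\omega:\omega)=A$; thus $(\omega:\NA)$ is an $\NA$-ideal contained in $A$ and therefore contained in the largest such, which is the conductor $\CI$ by definition.

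Granting this, the rest is bookkeeping. Applying $\Hom_{A}(\ARG,\omega)$ to the exact sequence $0\to A\to\NA\to\NA/A\to0$, using $\Ext^{1}_{A}(\NA,\omega)=0$ (as $\NA$ is maximal Cohen-Macaulay over $A$) and $\Hom_{A}(\NA/A,\omega)=0$ (as $\NA/A$ is torsion), one obtains
\[
0\lar\CI\lar\omega\lar\Ext^{1}_{A}(\NA/A,\omega)\lar0,
\]
so that the length-preserving property yields $\dim_{k}(\omega/\CI)=\dim_{k}(\NA/A)=\delta(A)$. On the other hand, from the chain $\CI\subseteq A\subseteq\omega\subseteq\NA$ and additivity of length we get $\dim_{k}(A/\CI)=\dim_{k}(\NA/\CI)-\dim_{k}(\NA/A)=c(A)-\delta(A)$ and $\dim_{k}(\omega/\CI)=\dim_{k}(\omega/A)+\dim_{k}(A/\CI)$. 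Comparing the two computations of $\dim_{k}(\omega/\CI)$ gives $\dim_{k}(\omega/A)=2\delta(A)-c(A)\ge0$, which is the lemma. The only real obstacle is to have the canonical-module package in hand correctly --- the normalisation $A\subseteq\omega\subseteq\NA$, the equality $(\omega:\omega)=A$, and the duality behaviour of $\Ext^{1}_{A}(\ARG,\omega)$; once these are available the inequality is essentially forced.
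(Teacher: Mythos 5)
Your argument is correct, but it is worth pointing out that the paper does not prove this lemma at all: it is quoted verbatim from Greuel \cite[2.4 (d)]{GMG}, so your proposal supplies a proof where the paper only gives a citation. What you write is essentially the classical argument (going back to Gorenstein, Ap\'ery and Serre) that $c(A)\le 2\delta(A)$ with equality exactly in the Gorenstein case, run through the canonical module: the identification $(\omega:\NA)=\CI$, the exact sequence $0\to\CI\to\omega\to\Ext^{1}_{A}(\NA/A,\omega)\to 0$ (using $\Ext^{1}_{A}(\NA,\omega)=0$ because $\NA$ is maximal Cohen--Macaulay and $\Hom_{A}(\NA/A,\omega)=0$ because $\omega$ is torsion free), and the length count $\dim_{k}(\omega/\CI)=\delta(A)$ versus $\dim_{k}(\omega/\CI)=\dim_{k}(\omega/A)+c(A)-\delta(A)$ are all correct and give the stated identity $\dim_{k}(\omega/A)=2\delta(A)-c(A)\ge 0$. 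The only ingredient you invoke without justification that genuinely needs one is the normalised embedding $A\subseteq\omega\subseteq\NA$: a canonical module exists here because $A$ is a quotient of a regular local ring, it embeds as a fractional ideal since $A$ is a one-dimensional CM domain, but to arrange $A\subseteq\omega\subseteq\NA$ one should, e.g., take a canonical ideal $I\subseteq A$, use that the residue field (containing $\CC$) is infinite to find a principal reduction $aA$ of $I$, and set $\omega=a^{-1}I$; then $1\in\omega$ and every element of $\omega$ stabilises $I^{n}$ for some $n$, hence lies in $\NA$. With that reference or argument added, your proof is complete and self-contained, which is arguably more informative than the paper's bare citation, at the cost of assuming the standard canonical-module package ($\Hom_{A}(\omega,\omega)\cong A$, injective dimension one, $\Ext^{1}_{A}(k,\omega)\cong k$) from, say, \cite{BH}.
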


\subsection{Serre duality}\label{SS:serre-duality}
Let $E$ be an integral projective curve. From the previous subsection we know
that $E$ is Cohen-Macaulay.
Therefore, on $E$ there exists a dualising sheaf, which we denote by
$\omega_{E}$, and there exist functorial isomorphisms of vector
spaces
\[\Ext^{1}(\kF,\omega_{E})\cong H^{0}(\kF)^{\ast}
\qquad\text{ and }\qquad
\Hom(\kF,\omega_{E})\cong H^{1}(\kF)^{\ast}\]
for all coherent sheaves $\kF$ on $E$, see \cite[III, \S7]{H}.

\subsection{Relative duality}\label{SS:duality}
We also need relative duality for the normalisation map
$\nu:\widetilde{E}\lar E$ of an integral projective curve $E$.
Because $\nu$ is a finite morphism, what we need is contained in \cite[III,
Ex.\ 6.10]{H} and worked out in detail in \cite{BC}.
\begin{itemize}
\item[(a)] For each quasi-coherent $\kO_{E}$-module $\kG$ there exists a
  unique (up to isomorphism) $\kO_{\widetilde{E}}$-module $\nu^{!}\kG$ such that
  \(\nu_{\ast}\nu^{!}\kG \cong
  \ShHom_{E}(\nu_{\ast}\kO_{\widetilde{E}},\kG)\).
  If $\kG$ is locally free, the projection formula implies
  $\nu^{!}\kG \cong \nu^{\ast}\kG\otimes\nu^{!}\kO_{E}$.
\item[(b)] For any coherent $\kO_{\widetilde{E}}$-module $\kF$ and any
  quasi-coherent $\kO_{E}$-module $\kG$ there exists a natural isomorphism
  \(\nu_{\ast}\ShHom_{\widetilde{E}}(\kF,\nu^{!}\kG)
  \stackrel{\sim}{\lar}
  \ShHom_{E}(\nu_{\ast}\kF,\kG)\). Taking global sections, we obtain
  \[\Hom_{\widetilde{E}}(\kF,\nu^{!}\kG)
  \cong
  \Hom_{E}(\nu_{\ast}\kF,\kG)\,.\]
\end{itemize}

\subsection{Global invariants}\label{SS:global}
Let $E$ be an integral projective curve over $\CC$ and
$\nu:\widetilde{E}\rightarrow E$ its normalisation. By $p_{a}(E)=h^{1}(\kO_{E})$
and $p_{g}(E)=p_{a}(\widetilde{E})$ we denote the arithmetic and geometric
genus of $E$, respectively.
To define global versions of the local invariants introduced above, we
look at the exact sequence
\[
  0\lar\kO_{E} \lar \nu_{\ast}\kO_{\widetilde{E}} \lar \kT \lar 0
\]
in which $\kT$ is a torsion sheaf with support in the singular locus of $E$.
If $x\in \sing(E)$ and $A=\kO_{E,x}$, this sequence localises at
$x$ to $0\lar A\lar\NA\lar\NA/A\lar 0$. Hence, $\dim_{\mathbb{C}}\kT_{x} =
\delta(\kO_{E,x})$ and we obtain
\[\delta(E):=\sum_{x\in \sing(E)} \delta(\kO_{E,x}) = \chi(\kT)
= \chi\bigl(\nu_{\ast}\kO_{\widetilde{E}}\bigr) - \chi\bigl(\kO_{E}\bigr)
= p_{a}(E)-p_{g}(E) \le p_{a}(E)\,.\]
Because $\delta(E)\ge 0$, we have $p_{g}(E)\le p_{a}(E)$.
The conductor ideal sheaf is defined as
\[\kC = \kA nn_{\kO_{E}}\bigl(\nu_{\ast}\kO_{\widetilde{E}}/\kO_{E}\bigr)\,.\]
The ideal sheaf
$\kC\subset\kO_{E}$ defines a subscheme $Z\subset E$ which is supported in
the singular locus $\sing(E)$ of $E$. Let $
\widetilde{Z}=Z\times_{E}\widetilde{E}$, which is a subscheme of
$\widetilde{E}$ whose ideal sheaf is denoted by $\widetilde{\kC}$.
From our local considerations, we know
$\NA/\CI \cong A/\CI\otimes_{A}\NA$ and $\CI\cdot\NA=\CI$. This implies that
$\widetilde{\kC}$ is the image of the canonical map $\nu^{\ast}\kC \lar
\kO_{\widetilde{E}}$. This gives us a canonical map $\kC \lar
\nu_{\ast}\nu^{\ast}\kC \lar \nu_{\ast}\widetilde{\kC}$, which locally
coincides with the composition $\CI \lar \CI\otimes_{A}\NA \lar \CI$, which is
equal to the identity. Hence, we have a canonical isomorphism
$\kC\stackrel{\sim}{\lar}\nu_{\ast}\widetilde{\kC}$.
Because
$\kC \cong \ShHom_{E}(\nu_{\ast}\kO_{\widetilde{E}}, \kO_{E})$, which we have
seen locally, the definition of $\nu^{!}$ implies
\[\nu^{!}\kO_{E} \cong \widetilde{\kC}\,.\]

\begin{lemma}\label{C:chi-conductor}
  $\chi\bigl(\nu^{!}\kO_{E}\bigr) \ge 1+p_{g}(E)-2p_{a}(E) \ge 1-2p_{a}(E)$
\end{lemma}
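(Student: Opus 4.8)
The plan is to push everything back to $E$ using the two identifications obtained just above, namely $\nu^{!}\kO_{E}\cong\widetilde{\kC}$ and $\kC\cong\nu_{\ast}\widetilde{\kC}$, and then to reduce the global estimate to the local inequality of Lemma \ref{L:GMG}. Since $\nu$ is finite, $R^{i}\nu_{\ast}=0$ for $i>0$, so $\chi(\kF)=\chi(\nu_{\ast}\kF)$ for every coherent sheaf $\kF$ on $\widetilde{E}$; in particular $\chi(\nu^{!}\kO_{E})=\chi(\nu_{\ast}\widetilde{\kC})=\chi(\kC)$ and $\chi(\kO_{\widetilde{E}})=\chi(\nu_{\ast}\kO_{\widetilde{E}})$. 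Because $E$ is integral, $\widetilde{E}$ is a smooth connected projective curve of genus $p_{g}(E)$, whence $\chi(\kO_{\widetilde{E}})=1-p_{g}(E)$.

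Next I would apply $\nu_{\ast}$, which is exact since $\nu$ is affine, to the sequence $0\lar\widetilde{\kC}\lar\kO_{\widetilde{E}}\lar\kO_{\widetilde{E}}/\widetilde{\kC}\lar 0$ to obtain a short exact sequence $0\lar\kC\lar\nu_{\ast}\kO_{\widetilde{E}}\lar\kT'\lar 0$ with $\kT'$ a torsion sheaf supported on $\sing(E)$. At a point $x\in\sing(E)$ with $A=\kO_{E,x}$ the stalk of $\kT'$ is $\NA/\CI$, using — as already done in this section — that $(\nu_{\ast}\kO_{\widetilde{E}})_{x}\cong\NA$ and $\kC_{x}\cong\CI$; hence $\dim_{\CC}\kT'_{x}=c(A)$ and $\chi(\kT')=\sum_{x\in\sing(E)}c(\kO_{E,x})$. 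The long exact (here just short exact) cohomology sequence then gives
\[
\chi(\kC)=\chi(\nu_{\ast}\kO_{\widetilde{E}})-\chi(\kT')=1-p_{g}(E)-\sum_{x\in\sing(E)}c(\kO_{E,x})\,.
\]

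Finally, each $\kO_{E,x}$ is the localisation of a $\CC$-algebra of finite type, so Lemma \ref{L:GMG} yields $c(\kO_{E,x})\le 2\delta(\kO_{E,x})$; summing over $\sing(E)$ and using $\sum_{x}\delta(\kO_{E,x})=\delta(E)=p_{a}(E)-p_{g}(E)$ gives $\sum_{x}c(\kO_{E,x})\le 2\bigl(p_{a}(E)-p_{g}(E)\bigr)$. Substituting this bound into the displayed formula,
\[
\chi\bigl(\nu^{!}\kO_{E}\bigr)=\chi(\kC)\ge 1-p_{g}(E)-2\bigl(p_{a}(E)-p_{g}(E)\bigr)=1+p_{g}(E)-2p_{a}(E)\,,
\]
and since $p_{g}(E)\ge 0$ this is $\ge 1-2p_{a}(E)$.

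The argument is essentially bookkeeping with Euler characteristics, made possible by the finiteness of $\nu$; the only substantive input is Greuel's inequality (Lemma \ref{L:GMG}). The one step that deserves a little care is the identification of the stalks of the torsion sheaf $\kT'$ with $\NA/\CI$, so that its total length is exactly $\sum_{x}c(\kO_{E,x})$ rather than something merely comparable to it — everything else is additivity of $\chi$ on short exact sequences together with the genus computation on $\widetilde{E}$.
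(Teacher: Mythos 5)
Your proof is correct and is essentially the paper's argument: both rest on the sequence $0\rightarrow\widetilde{\kC}\rightarrow\kO_{\widetilde{E}}\rightarrow\kO_{\widetilde{Z}}\rightarrow 0$ (you merely push it forward by the finite map $\nu$, which does not change Euler characteristics), the identification of the length of the quotient with $\sum_{x\in\sing(E)}c(\kO_{E,x})$, and Greuel's inequality from Lemma \ref{L:GMG} together with $\delta(E)=p_{a}(E)-p_{g}(E)$. The only difference is bookkeeping on $E$ versus on $\widetilde{E}$, which is immaterial.
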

\begin{proof}
  Using the exact sequence
  $0\rightarrow\widetilde{\kC}\rightarrow \kO_{\widetilde{E}}
  \rightarrow \kO_{\widetilde{Z}} \rightarrow 0$, we obtain
  \[\chi\bigl(\nu^{!}\kO_{E}\bigr) = \chi\bigl(\widetilde{\kC}\bigr)
  = \chi\bigl(\kO_{\widetilde{E}}\bigr) - \chi\bigl(\kO_{\widetilde{Z}}\bigr)
  = 1-p_{g}(E) - \chi\bigl(\kO_{\widetilde{Z}}\bigr)\,.\]
  Because $\chi\bigl(\kO_{\widetilde{Z}}\bigr)
  = \sum_{x\in \sing(E)} c\bigl(\kO_{E,x}\bigr)$, Lemma \ref{L:GMG}
  implies
  \[\chi\bigl(\kO_{\widetilde{Z}}\bigr)
  \le 2\sum_{x\in \sing(E)} \delta(\kO_{E,x})
  = 2\delta(E) = 2p_{a}(E)-2p_{g}(E)\,,\]
  hence $\chi\bigl(\nu^{!}\kO_{E}\bigr) \ge 1+p_{g}(E)-2p_{a}(E) \ge
  1-2p_{a}(E)$.
\end{proof}

\subsection{Families of simple sheaves}\label{SS:simple}

The families of sheaves considered in the definition of the functor
$\underline{\Mf}^{(n,d)}_{X/S}$ are simple on each fibre. We also need
that for such families the canonical morphism
\(\kO_{S}\rightarrow p_{\ast}\ShHom_{X}(\kF,\kF)\) is an isomorphism. This can
be found in \cite[Lemma 4.6.3]{HL} and in \cite[Cor.\ 5.3]{AK1} for the
case of schemes, which inspired the proof we present for the analytic case.

\begin{lemma}\label{L:simple}
  If $p:X \lar S$ be a proper and flat morphism of complex spaces and
  $\kF$ a coherent sheaf on $X$, flat over $S$, such that $\kF_{s}$ is simple
  for all $s \in S$, then \[\kO_{S}\stackrel{\sim}\lar
  p_{\ast}\ShHom_{X}(\kF,\kF)\,.\]
\end{lemma}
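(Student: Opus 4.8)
The assertion is local on $S$, so the plan is to reduce to a statement about a fibre and then propagate it. Since the formation of $\ShHom$ and of $p_{\ast}$ does not in general commute with base change, the right tool is cohomology and base change for the sheaf $\ShHom_{X}(\kF,\kF)$, whose restriction to a fibre $X_{s}$ we must relate to $\ShHom_{X_{s}}(\kF_{s},\kF_{s})$. Because $\kF$ is $S$-flat, one has $\bigl(\ShHom_{X}(\kF,\kF)\bigr)_{s}\cong\ShHom_{X_{s}}(\kF_{s},\kF_{s})$ — this is where $S$-flatness of $\kF$ enters, as the local $\Tor$ obstruction vanishes — and hence $H^{0}\bigl(X_{s},(\ShHom_{X}(\kF,\kF))_{s}\bigr)=\Hom_{X_{s}}(\kF_{s},\kF_{s})\cong\CC$, a one-dimensional space of constant dimension as $s$ varies. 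First I would therefore consider the coherent sheaf $\kG:=\ShHom_{X}(\kF,\kF)$ on $X$, which is flat over $S$ in the sense needed (or at least has fibrewise $H^{0}$ of locally constant dimension), and apply Grauert's base-change theorem (the analytic analogue of cohomology and base change, e.g. \cite[III.4.12]{BS} or Grauert's original statements) to conclude that $p_{\ast}\kG$ is locally free of rank one and that its formation commutes with base change; in particular for every $s$ the fibre $(p_{\ast}\kG)_{s}\to H^{0}(X_{s},\kG_{s})$ is an isomorphism.

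Next I would analyse the canonical morphism $\phi:\kO_{S}\to p_{\ast}\kG=p_{\ast}\ShHom_{X}(\kF,\kF)$, which comes from $\id_{\kF}$. Both source and target are now locally free $\kO_{S}$-modules of rank one, so to prove $\phi$ is an isomorphism it suffices to prove it is an isomorphism on each fibre over a point $s\in S$; equivalently, by Nakayama, that $\phi\otimes k(s)$ is nonzero (a nonzero map between one-dimensional vector spaces is an isomorphism). Using the base-change identification $(p_{\ast}\kG)\otimes k(s)\cong H^{0}(X_{s},\ShHom_{X_{s}}(\kF_{s},\kF_{s}))=\Hom_{X_{s}}(\kF_{s},\kF_{s})$, the map $\phi\otimes k(s)$ is precisely the canonical map $k(s)\to\Hom_{X_{s}}(\kF_{s},\kF_{s})$ sending $1$ to $\id_{\kF_{s}}$, which is injective — indeed an isomorphism — since $\kF_{s}$ is simple by hypothesis. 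Hence $\phi\otimes k(s)$ is an isomorphism for all $s$, so $\phi$ is an isomorphism by Nakayama's lemma, as $S$ is reduced (or more simply because a morphism of locally free modules that is fibrewise an isomorphism is an isomorphism). This completes the argument.

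The step I expect to be the main obstacle is the justified application of base change in the analytic category: one must check that the hypotheses of Grauert's theorem hold for $\kG=\ShHom_{X}(\kF,\kF)$, which requires (i) the compatibility $\kG_{s}\cong\ShHom_{X_{s}}(\kF_{s},\kF_{s})$, needing $S$-flatness of $\kF$ together with the fact that on each fibre $\ShHom$ commutes with the flat base change, and (ii) local constancy of $h^{0}(X_{s},\kG_{s})$, which is exactly $\dim_{\CC}\Hom_{X_{s}}(\kF_{s},\kF_{s})=1$ from simpleness. A mild technical point is that $\kG$ itself need not be $S$-flat; the cleanest route, following \cite[Lemma 4.6.3]{HL} and \cite[Cor.\ 5.3]{AK1}, is to invoke the relative $\ShHom$/$\ShExt$ machinery (semicontinuity and base-change for $\ShExt^{i}_{p}(\kF,\kF)$) rather than Grauert's theorem for a single sheaf, so that one works directly with $\mathbf{R}p_{\ast}\mathbf{R}\ShHom_{X}(\kF,\kF)$ and its base change. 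Once this relative-duality-free base-change input is in place, the remainder is the formal Nakayama argument above.
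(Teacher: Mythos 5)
Your plan rests on two base-change assertions that are not justified and, as stated, are the real content of the lemma. First, $S$-flatness of $\kF$ does \emph{not} give $\bigl(\ShHom_{X}(\kF,\kF)\bigr)\big|_{X_{s}}\cong\ShHom_{X_{s}}(\kF_{s},\kF_{s})$: choosing locally a finite presentation $\kO_{X}^{m}\to\kO_{X}^{n}\to\kF\to 0$, the sheaf $\ShHom_{X}(\kF,\kF)$ is the kernel of $\kF^{n}\to\kF^{m}$, and restricting a kernel to a fibre is only exact up to $\mathrm{Tor}_{1}$-terms over $\kO_{S}$ of the image sheaf, which flatness of $\kF$ alone does not kill; making $\ShHom$ commute with passage to fibres is precisely what the relative $\ShExt$ base-change theory is for. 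Second, even granting that identification, Grauert's theorem does not apply to $\kG=\ShHom_{X}(\kF,\kF)$: it needs $\kG$ to be $S$-flat (you concede this is unknown) and, for the conclusion ``constant $h^{0}$ implies $p_{\ast}\kG$ locally free and compatible with base change'', a \emph{reduced} base --- but the lemma assumes nothing about $S$ being reduced, and your Nakayama step also quietly invokes reducedness. Your fallback (``invoke semicontinuity and base change for $\ShExt^{i}_{p}(\kF,\kF)$'') is the right framework, but it is only gestured at, and the route you indicate --- constancy of $\dim\Hom_{X_{s}}(\kF_{s},\kF_{s})=1$ --- again does not by itself yield base change over a possibly non-reduced $S$.

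The paper's proof supplies exactly the missing mechanism and needs neither flatness of $\kG$, nor constancy arguments, nor reducedness. One takes the canonical map $\kO_{S}\to p_{\ast}\ShHom_{X}(\kF,\kF)$ (injective because $\kF$ is $S$-flat), writes $\kT$ for its cokernel, and restricts to $s$: the composition of the induced map $\alpha:\CC\to p_{\ast}\ShHom_{X}(\kF,\kF)(s)$ with the base-change morphism $\varphi_{s}:p_{\ast}\ShHom_{X}(\kF,\kF)(s)\to\Hom_{X_{s}}(\kF_{s},\kF_{s})$ is the canonical map $\CC\to\Hom_{X_{s}}(\kF_{s},\kF_{s})$, an isomorphism by simplicity. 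Hence $\varphi_{s}$ is surjective, and by the analytic base-change theorem for relative Ext, \cite[Satz 2 (ii)]{BPS}, surjectivity of $\varphi_{s}$ already forces it to be an isomorphism; consequently $\alpha$ is an isomorphism, $\kT(s)=0$ for every $s$, and Nakayama (Noetherian local rings, no reducedness needed) gives $\kT=0$. To repair your argument, replace ``Grauert applied to $\kG$'' by this surjectivity criterion: the key idea you are missing is that the identity endomorphism provides a global section of $p_{\ast}\ShHom_{X}(\kF,\kF)$ whose image generates the one-dimensional space $\Hom_{X_{s}}(\kF_{s},\kF_{s})$, which is what makes the base-change criterion applicable at every point.
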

\begin{proof}
  The canonical morphism $\kO_{S}\rightarrow p_{\ast}\ShHom_{X}(\kF,\kF)$ is
  injective because $\kF$ is $S$-flat. Restricting the exact sequence
  \(0\rightarrow \kO_{S} \rightarrow p_{\ast}\ShHom_{X}(\kF,\kF) \rightarrow
  \kT \rightarrow 0\) to a point $s\in S$, we obtain the exact sequence
  \[\CC \stackrel{\alpha}\lar p_{\ast}\ShHom_{X}(\kF,\kF)(s)
  \lar \kT(s) \lar 0\,.\]
  The base-change morphism
  $\varphi_{s}:p_{\ast}\ShHom_{X}(\kF,\kF)(s) \rightarrow
  \Hom_{X_{s}}(\kF_{s},\kF_{s})$ composed with $\alpha$ yields the canonical map
  $\CC\rightarrow\Hom_{X_{s}}(\kF_{s},\kF_{s})$, which was supposed to be an
  isomorphism. By \cite[Satz 2 (ii)]{BPS} the surjectivity of $\varphi_{s}$
  already implies that it is an isomorphism. Hence, $\alpha$ is an isomorphism
  as well and so $\kT(s)=0$ for all $s\in S$. As all local rings $\kO_{S,s}$
  are Noetherian, this is sufficient to conclude that $\kT=0$, which implies
  the claim.
\end{proof}
\section{A vanishing theorem for simple sheaves}
\label{S:vanishing}

Serre's Vanishing Theorem says that twists of a coherent sheaf with a
sufficiently high tensor power of an ample line bundle have vanishing
higher cohomology. How large this tensor power needs to be, in general depends
on the underlying projective variety and the coherent sheaf. We will prove
here a version for simple sheaves on complex projective curves in which the
vanishing is guaranteed above a threshold which only depends on the arithmetic
genus of the curve as well as the rank and the Euler characteristic of the
coherent sheaf.

Throughout this section, we fix integers $g\ge0, n\ge0$ and $d$. All curves
will be curves over the field $\CC$.
The main result of this section is the following theorem.

\begin{theorem}\label{T:vanishing}
  There exists an integer $m_{0}=m_{0}(g,n,d)$, depending only on $g, n, d$,
  such that for all integral projective curves $E$ of arithmetic genus $g$,
  for all simple coherent sheaves $\kF$ on $E$ with $\rk(\kF)=n$ and
  $\chi(\kF)=d$, for all line bundles $\kL$ of degree $1$ on $E$ and for all
  $m\ge m_{0}$ we have
  \[H^{1}(\kF\otimes\kL^{\otimes m}) = 0.\]
\end{theorem}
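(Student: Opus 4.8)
The strategy is to reduce the vanishing statement on $E$ to a statement on the normalisation $\widetilde{E}$, where we have complete control because $\widetilde{E}$ is a smooth projective curve of genus $p_g(E)\le g$. First I would use Serre duality on $E$ (Subsection \ref{SS:serre-duality}): $H^1(\kF\otimes\kL^{\otimes m})\cong\Hom_E(\kF\otimes\kL^{\otimes m},\omega_E)^*$, so it suffices to show $\Hom_E(\kF\otimes\kL^{\otimes m},\omega_E)=0$ for $m$ large. Equivalently, $\Hom_E(\kF,\omega_E\otimes\kL^{\otimes -m})=0$. A non-zero such homomorphism $\varphi$ has an image $\kG$ which is a subsheaf of $\omega_E\otimes\kL^{\otimes -m}$; the plan is to bound the degree (or Euler characteristic) of any subsheaf of $\kF$ that can be a quotient of $\kF$, and to observe that the target $\omega_E\otimes\kL^{\otimes -m}$ has very negative Euler characteristic once $m$ is large, giving a contradiction — provided we can control, uniformly in $E$, how negative the degrees of subsheaves of a quotient of $\kF$ can be.

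The key technical device for obtaining uniform control is to pull everything back along $\nu:\widetilde{E}\to E$. Using relative duality (Subsection \ref{SS:duality}), $\Hom_E(\nu_*\kF',\kG)\cong\Hom_{\widetilde E}(\kF',\nu^!\kG)$, and $\nu^!\kO_E\cong\widetilde\kC$ with $\chi(\nu^!\kO_E)\ge 1-2p_a(E)\ge 1-2g$ by Lemma \ref{C:chi-conductor}. The plan is: for a simple sheaf $\kF$ on $E$ with $\rk=n,\chi=d$, consider $\nu^*\kF$ modulo torsion, a vector bundle of rank $n$ on $\widetilde E$; simplicity of $\kF$ forces $\kF$ to be "close to" $\nu_*$ of a bundle on $\widetilde E$ — more precisely the torsion of $\nu^*\kF$ and the cokernel of $\kF\to\nu_*\nu^*\kF$ are supported on the singular locus and have length bounded by a function of $g$ (via $\delta(E)\le g$ and the conductor). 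This yields a uniform bound $|\deg(\nu^*\kF/\mathrm{tors})|\le C(g,n,d)$, and then simplicity (hence, as an indecomposable bundle on a smooth curve, semistability is NOT automatic, but) a bound on the slopes of subsheaves of $\nu^*\kF$ via the classical fact that for an indecomposable vector bundle on a smooth projective curve the Harder–Narasimhan slopes differ by at most $2g-2$ (or by using that a simple sheaf on $E$ has no nontrivial sub-line-bundles of too-positive degree). Transporting a hypothetical $\varphi:\kF\to\omega_E\otimes\kL^{\otimes-m}$ through $\nu^*$ and these bounds, the degree on $\widetilde E$ forces $m\le m_0(g,n,d)$.

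Concretely the steps are: (1) translate to $\Hom_E(\kF,\omega_E\otimes\kL^{\otimes-m})=0$ via Serre duality; (2) relate a non-zero $\varphi$ to a non-zero map $\nu^*\kF\to\nu^*(\omega_E\otimes\kL^{\otimes-m})$ on $\widetilde E$, or dually a map into $\nu^!(\omega_E\otimes\kL^{\otimes-m})$, using $\nu^*\kL$ of degree $\le$ (some bound) and $\nu^!$ twisting by $\widetilde\kC$; (3) bound uniformly the minimal slope of a quotient of the rank-$n$ bundle $\nu^*\kF/\mathrm{tors}$ on $\widetilde E$ in terms of $g,n,d$ (using the genus bound $p_g\le g$, the length bounds $\delta(E)\le g$, $c\le2g$, and the indecomposable-bundle slope spread $\le 2g-2$); (4) compute that $\deg\bigl(\nu^*(\omega_E\otimes\kL^{\otimes-m})\bigr)\to-\infty$ as $m\to\infty$ at a rate independent of $E$ (since $\deg\nu^*\kL\ge 1$ and $\deg\nu^*\omega_E$ is bounded by a function of $g$), so for $m\ge m_0(g,n,d)$ its slope drops below the minimal slope of any quotient of $\nu^*\kF/\mathrm{tors}$, forcing $\varphi=0$. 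The main obstacle I expect is step (3): making the control of subsheaf/quotient slopes of $\nu^*\kF$ genuinely uniform over all integral curves $E$ of genus $g$ at once — in particular handling the singular points and the torsion of $\nu^*\kF$ so that the passage between $\kF$ on $E$ and a bundle on $\widetilde E$ costs only a bounded amount, and dealing with the fact that a simple sheaf need not be semistable, so one must invoke the indecomposable-bundle slope-spread bound rather than semistability.
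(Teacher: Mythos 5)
Your reduction via Serre duality and your instinct that the normalisation and the conductor bounds ($\delta(E)\le g$, $c\le 2\delta$) must supply the uniformity are both in the spirit of the paper, but your step (3) has a genuine gap, and it is exactly the step where the whole difficulty sits. You propose to control the slope spread of $\nu^{\ast}\kF/\mathrm{tors}$ on $\widetilde{E}$ by invoking the slope-spread bound for \emph{indecomposable} bundles on a smooth curve. However, simplicity (or even indecomposability) of $\kF$ on $E$ does not imply indecomposability of its pullback to the normalisation: for any rational integral curve (e.g.\ a nodal or cuspidal curve of arithmetic genus $g$ with $\widetilde{E}\cong\mathbb{P}^{1}$), \emph{every} bundle on $\widetilde{E}$ splits completely into line bundles, so for $n\ge 2$ the pullback of a simple sheaf is always decomposable and the indecomposability argument gives nothing. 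Your fallback suggestion (no sub-line-bundles of too-positive degree) is also not enough: it would at best bound $\mu_{\text{max}}$ from above, whereas what is needed to kill $\Hom(\kF,\omega_{E}\otimes\kL^{\otimes(-m)})$ is a \emph{lower} bound on $\mu_{\text{min}}$ (equivalently, control of quotients), uniform in $E$.

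The paper closes this gap by using simplicity in a different, essential way (Proposition \ref{P:difference}): working with the Harder--Narasimhan filtration on $E$ itself, one assumes $\mu_{\text{min}}(\kF)$ is very negative, and then builds a chain of non-zero maps
$\kF\twoheadrightarrow\kA_{k}\rightarrow\omega_{E}\otimes\kL^{1-4g-g^{2}}\rightarrow\kO\rightarrow\kA_{1}\subset\kF$,
whose composition is a non-zero endomorphism of $\kF$; simplicity forces it to be an isomorphism, giving a contradiction and hence $\mu(\kF)-\mu_{\text{min}}(\kF)\le g^{2}+4g$. The only place where the normalisation, relative duality and the conductor estimates of Lemmas \ref{L:type-delta}, \ref{L:GMG} and \ref{C:chi-conductor} enter is the middle link, Lemma \ref{L:omega}: $\Hom(\omega_{E},\kL^{q})\ne0$ for $q\ge g^{2}+4g-2$, uniformly over all integral curves of genus $g$. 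So the normalisation is used to produce a map out of $\omega_{E}$, not to transport $\kF$ to $\widetilde{E}$; without an idea of this kind (turning a too-negative quotient slope into a non-scalar endomorphism), your plan as written does not go through. A minor additional omission: you should also dispose of simple sheaves that are not torsion free (for $n\ge1$ they do not occur; a simple sheaf with torsion is a length-one skyscraper, for which the vanishing is trivial), as the paper does at the start of its proof.
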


As a tool in our proofs we use a stability structure on the abelian category
of coherent sheaves on an integral projective curve $E$. It is convenient to
use the framework of \cite{Rudakov}, where stability is defined with the aid
of a linearly independent system of additive functions. We exclusively work
with the pair of additive functions $(\rk,\chi)$. We define
$\mu(\kF)=\chi(\kF)/\rk(\kF)$ when $\rk(\kF)>0$ and let $\mu(\kF)=\infty$ if
$\rk(\kF)=0$. A non-zero coherent sheaf $\kF$ is called semi-stable if
$\mu(\kG)\le\mu(\kF)$ for all non-zero subsheaves $\kG$.
In particular, all torsion sheaves are semi-stable and semi-stable sheaves of
positive rank are automatically torsion free. Moreover, it is not hard to see
that all torsion free sheaves of rank one are semi-stable.
The key facts we need are
\begin{itemize}
\item[(i)] If $\kF, \kG$ are semi-stable and $\mu(\kF)>\mu(\kG)$ then
  $\Hom(\kF,\kG)=0$.
\item[(ii)] For every non-zero coherent sheaf $\kF$ there exists a filtration,
  known as the Harder-Narasimhan filtration (HNF),
  $0=\kF_{0}\subset\kF_{1}\subset\ldots\subset\kF_{k-1}\subset\kF_{k}=\kF$,
  such that the factors $\kA_{i}=\kF_{i}/\kF_{i-1}$ are semi-stable and
  \[\mu_{\text{max}}(\kF)=\mu(\kA_{1})>\mu(\kA_{2})>\ldots
  >\mu(\kA_{k-1})>\mu(\kA_{k})=\mu_{\text{min}}(\kF)\,.\]
  This filtration is unique and so $\mu_{\text{max}}$ and $\mu_{\text{min}}$
  are well-defined. We easily obtain
  \(\mu_{\text{max}}(\kF) \ge \mu(\kF) \ge \mu_{\text{min}}(\kF)\).
\end{itemize}
Proofs are pretty well-known by now and can be found in \cite{Rudakov}.
Note that the abelian category of coherent sheaves on $E$ is
Noetherian (i.e.\ ascending chains of subobjects eventually stabilise) and,
with respect to the stability defined above, it is weakly Artinian in the sense
of \cite{Rudakov}, i.e.\ there is no infinite descending chain of coherent
subsheaves with strictly increasing slopes.

\begin{remark}
  The vanishing in Theorem \ref{T:vanishing} is easily obtained for
  semi-stable sheaves $\kF$. If $\kF$ is torsion, the vanishing is trivial. If
  $\kF$ is torsion free, we set $m_{0}=g-1-\frac{d}{n}$. Then we have
  $\mu(\kF)=\frac{d}{n}> 2g-2-m+1-g =
  \mu\left(\omega_{E}\otimes\kL^{\otimes(-m)}\right)$, whenever $m>m_{0}$.
  Because $\omega_{E}$ is torsion free of rank one, for $\kF$ semi-stable
  we obtain $\Hom\left(\kF, \omega_{E}\otimes\kL^{\otimes(-m)}\right) = 0$.
  Using Serre-duality this implies
  \[H^{1}(\kF\otimes\kL^{\otimes m})
  = \Hom(\kF\otimes\kL^{\otimes m},\omega_{E})^{\ast}
  = \Hom(\kF, \omega_{E}\otimes\kL^{\otimes(-m)})^{\ast} = 0.\]
  If $\kF$ is not semi-stable, we need to guarantee such a vanishing for all
  semi-stable quotients that can appear in the Harder-Narasimhan filtration of
  $\kF$, but their slopes are no longer equal to $\frac{d}{n}$. Our main task
  will be to find a lower bound for these slopes.
\end{remark}

\begin{lemma}\label{L:slope}
  Let $E$ be an integral projective curve, $\kF\in\Coh(E)$ a torsion free
  sheaf and $\kL\in\Pic(E)$, then \[\mu(\kF\otimes\kL)=\mu(\kF)+\deg(\kL).\]
\end{lemma}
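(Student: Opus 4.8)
Since $\kL$ is invertible, $\kF\otimes\kL$ is again torsion free and $r:=\rk(\kF)=\rk(\kF\otimes\kL)>0$; recalling that $\deg(\kL)=\chi(\kL)-\chi(\kO_{E})$, the claim is equivalent, after dividing by $r$, to the Riemann--Roch type identity $\chi(\kF\otimes\kL)=\chi(\kF)+r\deg(\kL)$. The plan is to prove this first for $\kL=\kO_{E}(D)$ with $D$ an effective Cartier divisor, and then to reduce the general case to it. For the reduction, fix an ample invertible sheaf $\kO_{E}(1)$; for $n\gg0$ both $\kL\otimes\kO_{E}(n)$ and $\kO_{E}(n)$ have a nonzero global section (Serre), whose vanishing locus is an effective Cartier divisor, say $D_{+}$ resp.\ $D_{-}$, because on the integral scheme $E$ a nonzero section of an invertible sheaf is a non-zero-divisor. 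Hence $\kL\cong\kO_{E}(D_{+}-D_{-})$, and applying the effective case to the pairs $(\kF\otimes\kL,D_{-})$, $(\kF,D_{+})$ and $(\kL,D_{-})$, together with $\kF\otimes\kL\otimes\kO_{E}(D_{-})\cong\kF\otimes\kO_{E}(D_{+})$, yields both $\chi(\kF\otimes\kL)-\chi(\kF)=r\bigl(\deg\kO_{E}(D_{+})-\deg\kO_{E}(D_{-})\bigr)$ and $\deg(\kL)=\deg\kO_{E}(D_{+})-\deg\kO_{E}(D_{-})$.

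For the effective case, near a point $x$ of the support of $D$ write $D=V(f)$ with $f$ a non-zero-divisor in $\kO_{E,x}$. Since $\kF\otimes\kO_{E}(D)$ is torsion free, multiplication by $f$ is injective on its stalk at $x$, so $\mathcal{T}\mathsf{or}^{\kO_{E}}_{1}\bigl(\kO_{D},\kF\otimes\kO_{E}(D)\bigr)=0$ (computed from the resolution $0\to\kO_{E}\xrightarrow{\,f\,}\kO_{E}\to\kO_{D}\to0$ of $\kO_{D}$ near $x$), and tensoring the structure sequence of $D$ with $\kF\otimes\kO_{E}(D)$ gives the short exact sequence
\[
0\lar\kF\lar\kF\otimes\kO_{E}(D)\lar\bigl(\kF\otimes\kO_{E}(D)\bigr)\big|_{D}\lar0.
\]
Since the rightmost term is a torsion sheaf, additivity of $\chi$ gives $\chi(\kF\otimes\kO_{E}(D))-\chi(\kF)=\sum_{x}\dim_{\CC}\bigl(\kF_{x}/f\kF_{x}\bigr)$, the sum running over the points of $D$ and using that $\kO_{E}(D)$ is locally free of rank one. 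Taking $\kF=\kO_{E}$ shows $\deg\kO_{E}(D)=\dim_{\CC}\kO_{D}=\sum_{x}\dim_{\CC}\bigl(\kO_{E,x}/f\kO_{E,x}\bigr)$, so everything reduces to the local equality $\dim_{\CC}\bigl(\kF_{x}/f\kF_{x}\bigr)=r\cdot\dim_{\CC}\bigl(\kO_{E,x}/f\kO_{E,x}\bigr)$ at every such $x$.

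This local equality is the main obstacle, and the only place where a singularity of $E$ matters: at a smooth point $\kF_{x}$ is free over the discrete valuation ring $\kO_{E,x}$ and the equality is obvious, but at a singular point $\kF_{x}$ need not be free. There I would set $A=\kO_{E,x}$, $M=\kF_{x}$, choose a free submodule $A^{r}\subseteq M$ of full rank with finite-length cokernel $T$, and apply the snake lemma to multiplication by $f$ on $0\to A^{r}\to M\to T\to0$; using that $f$ is a non-zero-divisor on $A$ and on $M$ and that $\dim_{\CC}(T/fT)=\dim_{\CC}\ker(f\colon T\to T)$ for the finite-length module $T$, one obtains $\dim_{\CC}(M/fM)=r\dim_{\CC}(A/fA)$ (equivalently, this is additivity of the Samuel multiplicity $e(f;\ARG)$ and its vanishing on finite-length modules). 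Dividing $\chi(\kF\otimes\kL)=\chi(\kF)+r\deg(\kL)$ by $r$ then finishes the proof. A shorter variant avoids the local computation entirely: the assignment $\kG\mapsto\chi(\kG\otimes\kL)-\chi(\kG)-\rk(\kG)\deg(\kL)$ is additive on short exact sequences, vanishes on torsion sheaves and on $\kO_{E}$, and is unchanged when a torsion free sheaf is twisted by $\kO_{E}(\pm1)$ (via a hyperplane section); it therefore vanishes on every rank one torsion free sheaf, which after a twist acquires a section, and hence on every torsion free sheaf through the filtration with rank one torsion free subquotients obtained at the generic point.
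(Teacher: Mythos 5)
Your proof is correct, but it follows a genuinely different route from the paper. The paper reduces to the normalisation $\nu:\widetilde{E}\rightarrow E$: using additivity it passes from a torsion free $\kF$ to $\nu_{\ast}\widetilde{\kF}$ with $\widetilde{\kF}$ a vector bundle on the smooth curve $\widetilde{E}$, and then concludes by the projection formula, $\chi(\nu_{\ast}(\ARG))=\chi(\ARG)$, and classical Riemann--Roch on $\widetilde{E}$. You instead stay on the (possibly singular) curve $E$ itself: you write $\kL$ as a difference of effective Cartier divisors coming from sections of ample twists, tensor the structure sequence of such a divisor with the torsion free sheaf (the $\mathrm{Tor}_{1}$-vanishing you give is exactly what makes this exact), and reduce everything to the local statement $\dim_{\CC}(\kF_{x}/f\kF_{x})=\rk(\kF)\cdot\dim_{\CC}(\kO_{E,x}/f\kO_{E,x})$ at a singular point, which you settle by the snake-lemma/Samuel-multiplicity argument on $0\rightarrow A^{r}\rightarrow M\rightarrow T\rightarrow 0$; your closing d\'evissage via rank one torsion free subquotients is a valid shortcut that even bypasses that local computation. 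What the paper's argument buys is brevity, since all the work is delegated to standard facts about finite morphisms and Riemann--Roch on a smooth curve; what yours buys is self-containedness (no normalisation, no projection formula, no relative duality input) and, in particular, it avoids the implicit appeal to $\deg(\nu^{\ast}\kL)=\deg(\kL)$, which with the convention $\deg(\kL)=\chi(\kL)-\chi(\kO_{E})$ (the convention both you and the paper are effectively using) is itself an instance of the lemma. Both arguments yield the more general identity $\chi(\kG\otimes\kL)=\chi(\kG)+\deg(\kL)\rk(\kG)$ for arbitrary coherent $\kG$, which is how the paper states it in the proof.
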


\begin{proof}
  Because $\rk(\kF)>0$, the statement is equivalent to the equation
  \begin{equation}
    \label{eq:chi}
    \chi(\kF\otimes\kL) = \chi(\kF)+\deg(\kL)\rk(\kF).
  \end{equation}
  This equation holds true for arbitrary coherent sheaves on $E$. It is
  obvious if $\kF$ is a torsion sheaf. Because $\chi$ and $\rk$ are additive
  functions, both sides of equation \eqref{eq:chi} are additive. Looking at
  the exact sequence
  $0 \rightarrow \tors(\kF) \rightarrow \kF \rightarrow \kG \rightarrow 0$,
  where $\kG$ is a torsion free sheaf and $\tors(\kF)$ is the
  torsion subsheaf of $\kF$, we see that it is sufficient to prove equality
  \eqref{eq:chi} for torsion free sheaves.

  If $\kF$ is torsion free and $\nu:\widetilde{E}\rightarrow E$ is the
  normalisation, we let $\widetilde{\kF}=\nu^{\ast}(\kF)/\text{torsion}$ and
  obtain an exact sequence $0\rightarrow \kF \rightarrow
  \nu_{\ast}\widetilde{\kF} \rightarrow \kT \rightarrow 0$, where $\kT$ is a
  torsion sheaf on $E$. Hence it is enough to show equation \eqref{eq:chi} for
  sheaves $\kF=\nu_{\ast}\widetilde{\kF}$, where $\widetilde{\kF}$ is a vector
  bundle on the smooth curve $\widetilde{E}$. The projection formula gives an
  isomorphism $\nu_{\ast}\widetilde{\kF}\otimes\kL \cong
  \nu_{\ast}\bigl(\widetilde{\kF}\otimes\nu^{\ast}\kL\bigr)$. The usual
  Riemann-Roch for vector bundles on $\widetilde{E}$ implies
  \begin{align*}
    \chi\bigl(\nu_{\ast}\widetilde{\kF}\otimes\kL\bigr)
    &=
    \chi\bigl(\nu_{\ast}\bigl(\widetilde{\kF}\otimes\nu^{\ast}\kL\bigr)\bigr)
    = \chi\bigl(\widetilde{\kF}\otimes\nu^{\ast}\kL\bigr)
    = \chi\bigl(\widetilde{\kF}\bigr) +
    \deg\bigl(\nu^{\ast}\kL\bigr)\rk\bigl(\widetilde{\kF}\bigr)\\
    &= \chi\bigl(\nu_{\ast}\widetilde{\kF}\bigr) +
    \deg\bigl(\kL\bigr)\rk\bigl(\nu_{\ast}\widetilde{\kF}\bigr)
  \end{align*}
  from which the claim follows.
\end{proof}

\begin{remark}
  Because tensor product with a line bundle $\kL$ is an exact functor,
  the HN factors of $\kF\otimes\kL$ are $\kA_{i}\otimes\kL$, when
  $\kA_{i}$ are the HN factors of $\kF$. As a consequence, the equation in
  Lemma \ref{L:slope} holds for $\mu_{\text{max}}$ and $\mu_{\text{min}}$
  as well.
\end{remark}

\begin{lemma}\label{L:small-slope}
  Let $E$ be an integral projective curve, $\kA\in\Coh(E)$ torsion free,
  $q\in\mathbb{Z}$ and $\kL\in\Pic(E)$ such that $\mu(\kA)<-q\deg(\kL)$, then
  $\Hom(\kA,\omega_{E}\otimes\kL^{-q}) \ne 0.$
\end{lemma}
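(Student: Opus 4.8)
The plan is to reduce the claim to a Serre-duality computation combined with the obvious fact that a torsion-free sheaf of sufficiently negative slope has nonzero $H^1$. First I would observe that, since tensoring with a line bundle is an exact autoequivalence preserving torsion-freeness, the hypothesis $\mu(\kA) < -q\deg(\kL)$ together with Lemma \ref{L:slope} gives $\mu(\kA\otimes\kL^{q}) = \mu(\kA) + q\deg(\kL) < 0$, so after replacing $\kA$ by $\kA\otimes\kL^{q}$ it suffices to treat the case $q=0$: namely, if $\kA$ is torsion free with $\mu(\kA)<0$, then $\Hom(\kA,\omega_E)\ne 0$. By Serre duality (Section \ref{SS:serre-duality}) we have $\Hom(\kA,\omega_E)\cong H^1(\kA)^{\ast}$, so the task becomes: a torsion-free sheaf of negative slope on an integral projective curve has $H^1(\kA)\ne 0$.

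For that last step I would argue by contradiction: if $H^1(\kA)=0$ then $\chi(\kA)=h^0(\kA)\ge 0$, hence $\mu(\kA)=\chi(\kA)/\rk(\kA)\ge 0$, contradicting $\mu(\kA)<0$. (Here $\rk(\kA)>0$ because $\mu(\kA)$ is finite, indeed negative.) This is completely elementary once the reduction is in place; the role of torsion-freeness is merely to guarantee positive rank so that $\mu(\kA)$ makes sense and the slope inequality has content.

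The only point requiring a little care — and the main (minor) obstacle — is bookkeeping with the twist: one must check that $\omega_E\otimes\kL^{-q}$ tensored by $\kL^{q}$ returns $\omega_E$, and that Lemma \ref{L:slope} is applied to the torsion-free sheaf $\kA$ (not to $\omega_E\otimes\kL^{-q}$, which is also torsion free of rank one but plays a passive role). Concretely, $\Hom(\kA,\omega_E\otimes\kL^{-q})\cong\Hom(\kA\otimes\kL^{q},\omega_E)\cong H^1(\kA\otimes\kL^{q})^{\ast}$, and $\chi(\kA\otimes\kL^{q})=\chi(\kA)+q\deg(\kL)\rk(\kA)=\rk(\kA)\bigl(\mu(\kA)+q\deg(\kL)\bigr)<0$ by hypothesis, forcing $H^1(\kA\otimes\kL^{q})\ne0$ and hence $\Hom(\kA,\omega_E\otimes\kL^{-q})\ne0$. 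That completes the argument.
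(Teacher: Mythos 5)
Your argument is correct and is essentially the paper's own proof: twist by $\kL^{q}$, apply Lemma \ref{L:slope} to get $\chi(\kA\otimes\kL^{q})<0$, deduce $H^{1}(\kA\otimes\kL^{q})\ne0$, and conclude via Serre duality together with $\Hom(\kA\otimes\kL^{q},\omega_{E})\cong\Hom(\kA,\omega_{E}\otimes\kL^{-q})$. The extra bookkeeping you flag (checking the twist untwists correctly) is handled implicitly in the paper and poses no issue.
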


\begin{proof}
  By Lemma \ref{L:slope} we have $\mu(\kA\otimes\kL^{q}) =
  \mu(\kA)+q\deg(\kL)<0$, hence $\chi(\kA\otimes\kL^{q})<0$ and this implies
  $0\ne H^{1}(\kA\otimes\kL^{q})
  = \Hom(\kA\otimes\kL^{q},\omega_{E})^{\ast}
  =\Hom(\kA,\omega_{E}\otimes\kL^{-q})^{\ast}$, using Serre-duality.
\end{proof}

\begin{lemma}\label{L:omega}
  Let $g\ge 0$ be an integer. Then, for all integral projective curves $E$ of
  arithmetic genus $g$, for all line bundles $\kL$ of degree one on $E$ and
  for all $q\ge g^{2}+4g-2$
  \[\Hom(\omega_{E},\kL^{q})\ne 0\,.\]
\end{lemma}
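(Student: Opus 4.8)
The plan is to reduce the statement to a numerical comparison of Euler characteristics and then to apply Serre duality, exactly as in the previous two lemmas. First I would dualise: by Serre duality on $E$ (Subsection \ref{SS:serre-duality}) we have
\[
\Hom(\omega_{E},\kL^{q}) \cong \Ext^{1}(\kL^{q},\omega_{E}\otimes\omega_{E})^{\ast}
\]
is not quite what we want, so instead I would go the other way and bound $\hom(\omega_{E},\kL^{q})$ from below by a Riemann--Roch type estimate. The cleanest route: $\Hom(\omega_{E},\kL^{q}) = H^{0}(\ShHom_{E}(\omega_{E},\kL^{q}))$, and since $\omega_{E}$ is torsion free of rank one, $\ShHom_{E}(\omega_{E},\kL^{q})$ is again a torsion free rank one sheaf, call it $\kG$, with a natural injection $\kL^{q}\otimes\omega_{E}^{\vee\vee}{}^{-1}\hookrightarrow\ldots$; rather than chase the non-reflexivity of $\omega_{E}$, I would instead estimate $\chi(\kG)$ and use that $h^{0}(\kG)\ge\chi(\kG)$. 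By Lemma \ref{L:slope} applied to the torsion free sheaf $\kG$ (whose rank is one), $\chi(\kG)=\mu(\kG)=\mu(\ShHom_{E}(\omega_{E},\kL^{q}))$; one computes $\mu(\ShHom_{E}(\omega_{E},\kL^{q})) = q - \mu(\omega_{E}) = q - (2g-2)$, using that $\mu(\omega_{E})=\chi(\omega_{E})=2g-2+ (1-g) \cdot ? $ — here I must be careful: $\chi(\omega_{E}) = -\chi(\kO_{E}) = g-1$ by Serre duality, so $\mu(\omega_{E})=g-1$ and hence $\chi(\kG)\ge q-(g-1)$ provided $\ShHom$ behaves like tensoring by a degree-$(-\mu(\omega_E))$ line bundle on the generic point.

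The genuine obstacle, and the reason the bound in the statement is quadratic rather than linear in $g$, is that $\omega_{E}$ need not be invertible when $E$ is singular (it is invertible iff $E$ is Gorenstein), so $\ShHom_{E}(\omega_{E},\kL^{q})$ is only torsion free of rank one, not a line bundle, and the difference between $\chi$ of this sheaf and the naive count can be as large as the total non-Gorenstein defect $\sum_{x}(\type(\kO_{E,x})-1)$ at the singularities. The strategy to control this is to bound that defect using the material of Subsection \ref{SS:local}: by Lemma \ref{L:type-delta}, $\type(\kO_{E,x})\le\delta(\kO_{E,x})$ at each non-regular point, so $\sum_{x}(\type(\kO_{E,x})-1)\le\sum_{x}\delta(\kO_{E,x})=\delta(E)\le p_{a}(E)=g$ by Subsection \ref{SS:global}. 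More precisely I would use the two short exact sequences relating $\omega_{E}$, $\kO_{E}$, $\nu_{\ast}\kO_{\widetilde{E}}$ and $\nu_{\ast}\omega_{\widetilde{E}}$ — recall $\nu^{!}\kO_{E}\cong\widetilde{\kC}$ and $\omega_{E}\cong\nu_{\ast}\omega_{\widetilde{E}}\otimes(\text{conductor correction})$ — together with Lemma \ref{C:chi-conductor}, to get an explicit lower bound $\chi(\ShHom_{E}(\omega_{E},\kL^{q}))\ge q - C(g)$ for an explicit quadratic $C(g)$. Matching $q\ge g^{2}+4g-2$ with $q>C(g)$ is then the routine arithmetic that closes the proof.

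Concretely, the steps in order are: (1) identify $\Hom(\omega_{E},\kL^{q})$ with $H^{0}$ of a torsion free rank one sheaf $\kG:=\ShHom_{E}(\omega_{E},\kL^{q})$; (2) since $h^{0}(\kG)\ge\chi(\kG)$, reduce to showing $\chi(\kG)>0$; (3) compute $\chi(\kG)$ via relative duality (Subsection \ref{SS:duality}): pulling $\omega_{E}$ and $\kL^{q}$ to $\widetilde{E}$ and using $\nu^{!}$, express $\chi(\kG)$ in terms of $\deg(\nu^{\ast}\kL)=1$, $p_{g}(E)$, and $\chi(\widetilde{\kC})=\chi(\nu^{!}\kO_{E})$; (4) feed in the lower bound $\chi(\nu^{!}\kO_{E})\ge 1-2p_{a}(E)=1-2g$ from Lemma \ref{C:chi-conductor}, and the bound $\delta(E)\le g$; (5) conclude $\chi(\kG)\ge q-(g^{2}+4g-2)+1 > 0$ whenever $q\ge g^{2}+4g-2$. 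The main obstacle is step (3)–(4): keeping track of the precise contributions of the conductor and of the non-Gorenstein locus so that the constant one lands on is exactly $g^{2}+4g-2$; everything else is bookkeeping with additive functions and Serre duality. I expect the quadratic term $g^{2}$ to come from combining the linear bound $\delta(E)\le g$ with a further factor of $g$ coming from $\deg\omega_{\widetilde{E}}=2p_{g}(E)-2$ (so at most $2g-2$) in the Riemann–Roch count on $\widetilde{E}$, i.e. from a product of two quantities each $O(g)$.
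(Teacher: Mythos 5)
Your proposal is essentially the paper's own proof: the paper likewise compares $\omega_{E}$ with $\nu_{\ast}\omega_{\widetilde{E}}$ via the exact sequence $0\to\nu_{\ast}\omega_{\widetilde{E}}\to\omega_{E}\to\kT\to 0$, bounds the torsion correction by $g^{2}$ using $\type\le\delta$ (Lemma \ref{L:type-delta}) together with $\delta(E)\le g$ and $\chi(\kT)\le g$, and obtains the term growing in $q$ from relative duality, Lemma \ref{C:chi-conductor} and $h^{0}\ge\chi$ on the normalisation, exactly the ingredients you list. The only difference is packaging — you estimate $\chi\bigl(\ShHom_{E}(\omega_{E},\kL^{q})\bigr)$ directly (after passing to $\nu_{\ast}\omega_{\widetilde{E}}$, since relative duality applies to pushforwards, not to $\omega_{E}$ itself), whereas the paper compares dimensions in the global sequence $0\to\Hom(\omega_{E},\kL^{q})\to\Hom(\nu_{\ast}\omega_{\widetilde{E}},\kL^{q})\to\Ext^{1}(\kT,\kL^{q})$ — and this is not a genuinely different route.
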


\begin{proof}
  Let $\nu:\widetilde{E}\rightarrow E$ be the normalisation.
  As $\nu^{!}\omega_{E}\cong\omega_{\widetilde{E}}$ (see \cite[III Ex.\
  7.2]{H}) and $(\nu_{\ast},\nu^{!})$ is an adjoint pair, there is a natural
  morphism of $\kO_{E}$-modules
  $\nu_{\ast}\omega_{\widetilde{E}} \rightarrow \omega_{E}$. Because
  $\omega_{\widetilde{E}}$ is torsion free and the natural morphism is an
  isomorphism on the regular locus of $E$, we obtain an exact sequence
  \begin{equation}
    \label{eq:T}
    0\lar\nu_{\ast}\omega_{\widetilde{E}} \lar \omega_{E} \lar \kT \lar 0
  \end{equation}
  in which $\kT$ is a torsion sheaf with support in the singular locus of $E$.
  Applying the functor $\Hom(\ARG,\kL^{q})$ produces the exact sequence
  \begin{equation}
    \label{eq:sequ}
    0\lar\Hom(\omega_{E},\kL^{q})
    \lar\Hom(\nu_{\ast}\omega_{\widetilde{E}},\kL^{q})
    \lar\Ext^{1}(\kT,\kL^{q}).
  \end{equation}
  We first give an upper bound, not depending on $q$, for the dimension
  of the third term in this sequence. Then we will show that the dimension of
  the second term grows unboundedly with $q$.

  For each singular point $x\in E$ we
  denote by $\ell_{x}$ the dimension of the stalk $\kT_{x}$, so that
  $\chi(\kT) = \sum_{x\in \sing(E)} \ell_{x}$.
  Using a Jordan-H\"{o}lder filtration for the $\kO_{E,x}$-module $\kT_{x}$,
  we obtain
  $\dim \ShExt^{1}(\kT,\kL^{q})_{x} = \dim \Ext^{1}_{\kO_{E,x}}(\kT_{x},\kO_{E,x})
  \le \ell_{x}\cdot\type(\kO_{E,x})$.
  Because $\Ext^{1}(\kT,\kL^{q}) = H^{0}(\ShExt^{1}(\kT,\kL^{q}))$,
  Lemma \ref{L:type-delta} implies now
  \[\dim\Ext^{1}(\kT,\kL^{q})
  \le \sum_{x\in \sing(E)} \ell_{x}\delta\left(\kO_{E,x}\right)
  \le \delta(E)\sum_{x\in \sing(E)} \ell_{x}
  = \delta(E)\chi(\kT).\]
  From \eqref{eq:T} we get  $\chi(\kT)
  = \chi(\omega_{E}) - \chi(\nu_{\ast}\omega_{\widetilde{E}})
  = p_{a}(E) - p_{g}(E) \le p_{a}(E)=g$.
  We have seen in Subsection \ref{SS:global} that $\delta(E)\le p_{a}(E)=g$ and
  therefore we can conclude
  \begin{equation}
    \label{eq:ext}
    \dim\Ext^{1}(\kT,\kL^{q}) \le  g^{2}\,.
  \end{equation}

  To study the second term in the sequence \eqref{eq:sequ} we
  first use relative duality to obtain
  \begin{align*}
    \Hom_{E}(\nu_{\ast}\omega_{\widetilde{E}},\kL^{q})
    &= \Hom_{\widetilde{E}}(\omega_{\widetilde{E}},
    \nu^{!}\kL^{q})
    = \Hom_{\widetilde{E}}(\omega_{\widetilde{E}},
    \nu^{\ast}\kL^{q}\otimes\nu^{!}\kO)\\
    &=
    H^{0}(\nu^{\ast}\kL^{q}\otimes\omega_{\widetilde{E}}^{\vee}\otimes\nu^{!}\kO)
    \,.
  \end{align*}
  Because $\nu^{\ast}\kL^{q}\otimes\omega_{\widetilde{E}}^{\vee}$ is an
  invertible sheaf and $\nu^{!}\kO$ has rank one, with the aid of Lemma
  \ref{L:slope} and Lemma \ref{C:chi-conductor}, and using
  $g=p_{a}(E)\ge p_{g}(E)$, we obtain
  \begin{align*}
    h^{0}\bigl(\nu^{\ast}\kL^{q}\otimes\omega_{\widetilde{E}}^{\vee}
    \otimes\nu^{!}\kO\bigr) &\ge
    \chi\bigl(\nu^{\ast}\kL^{q}\otimes\omega_{\widetilde{E}}^{\vee}
    \otimes\nu^{!}\kO\bigr) =
    \chi(\nu^{!}\kO) +
    \deg\bigl(\nu^{\ast}\kL^{q}\otimes\omega_{\widetilde{E}}^{\vee}\bigr)\\
    &=
    \chi(\nu^{!}\kO) + q-2p_{g}(E)+2
    \ge 1-2p_{a}(E)+ q-2p_{g}(E)+2\\
    &\ge q+3-4g \ge g^{2}+1
  \end{align*}
  whenever $q\ge g^{2}+4g-2$. The sequence \eqref{eq:sequ} implies now that
  $\Hom(\omega_{E},\kL^{q})\ne 0$ for all $q\ge g^{2}+4g-2$.
\end{proof}

\begin{proposition}\label{P:difference}
  Let $E$ be an integral projective curve of arithmetic genus $g$ and let
  $\kF\in\Coh(E)$ be a simple torsion free sheaf, then
  \[\mu(\kF)-\mu_{\text{min}}(\kF)\le g^{2}+4g\,.\]
\end{proposition}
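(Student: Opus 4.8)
The plan is to argue by contradiction, exploiting the Harder--Narasimhan filtration together with the simplicity of $\kF$. Suppose $\mu(\kF)-\mu_{\text{min}}(\kF) > g^{2}+4g$. Let $0=\kF_{0}\subset\kF_{1}\subset\dots\subset\kF_{k}=\kF$ be the HNF, with semi-stable factors $\kA_{i}=\kF_{i}/\kF_{i-1}$ and slopes $\mu(\kA_{1})>\dots>\mu(\kA_{k})=\mu_{\text{min}}(\kF)$. Since all $\kA_{i}$ have positive rank (quotients of the torsion free $\kF$, minimal slope $\mu_{\text{min}}>-\infty$), they are torsion free. The key is to produce a nonzero map from $\kF$ to itself that is not an isomorphism, contradicting simplicity.

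First I would note that by Lemma~\ref{L:slope} (extended to $\mu_{\text{min}}$) and Lemma~\ref{L:omega}, one can find a line bundle $\kL$ of degree one and an integer $q\ge g^{2}+4g-2$ so that, after twisting, $\omega_{E}\otimes\kL^{-q}$ has slope squeezed strictly between $\mu(\kA_{k})$ and $\mu(\kA_{k-1})$; more precisely I want a nonzero morphism $\kA_{k}\to\omega_{E}\otimes\kL^{-q}$ coming from Lemma~\ref{L:small-slope} (which needs $\mu(\kA_{k})<-q\deg\kL=-q$), and simultaneously a nonzero morphism $\omega_{E}\to\kL^{q}$, i.e.\ $\omega_{E}\otimes\kL^{-q}\to\kO_{E}$, from Lemma~\ref{L:omega}. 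Composing, and then composing with the inclusion $\kO_{E}\hookrightarrow$ some twist back into $\kF$ is the delicate point; the cleaner route is: the composite $\kA_{k}\to\omega_{E}\otimes\kL^{-q}\to\kO_{E}$ is nonzero because $\kA_{k}$ is torsion free of positive rank and the first map is generically nonzero while the second is an isomorphism on the smooth locus. This gives a nonzero morphism $\kF\twoheadrightarrow\kA_{k}\to\kO_{E}$. To turn this into an endomorphism of $\kF$ I would use that if the slope gap is large enough, $\mu_{\text{min}}(\kF)$ is so negative that $\kO_{E}$ (slope roughly $1-g$, or $\mu_{\text{max}}(\kO_E)\le$ something bounded in terms of $g$) admits a nonzero map \emph{into} $\kF_{1}\subset\kF$: indeed if $\mu_{\text{max}}(\kF)=\mu(\kA_1)$ is large relative to $g$, then by Serre duality and the same slope estimates $\Hom(\kO_E,\kF)\ne0$, or more robustly $\Hom(\kA_1^{\vee\vee}\text{-type object},\dots)$. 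The honest statement to aim for: the hypothesis $\mu(\kF)-\mu_{\text{min}}(\kF)>g^{2}+4g$ forces both $\mu(\kA_k)<-q$ and $\mu(\kA_1)>2g-2+q$ for a suitable $q$, so that $\Hom(\kA_k,\omega_E\otimes\kL^{-q})\ne0$ and $\Hom(\omega_E\otimes\kL^{-q},\kA_1)\ne0$ via Serre duality and Lemma~\ref{L:omega}'s argument applied with the roles adjusted.

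Concretely, the construction I expect to work: pick $q=g^{2}+4g-2$. If $\mu(\kF)-\mu_{\text{min}}(\kF)>g^{2}+4g=q+2$, I claim one can choose the twist so that $\mu_{\text{min}}(\kF)<\mu(\kF)-(q+2)$ and then translate so $\mu_{\text{max}}$ lands above $2g-2-q$ while $\mu_{\text{min}}$ lands below $1-g-q$ — but since $\kF$ is fixed this normalisation is really about comparing $\mu(\kA_k)$ with $\mu(\omega_E\otimes\kL^{-q})$ for varying $\kL,q$ subject to $\deg(\kL^{\otimes m})$ tracking things. The composite endomorphism is then
\[
\kF\twoheadrightarrow\kA_{k}\xrightarrow{\ \varphi\ }\omega_{E}\otimes\kL^{-q}\xrightarrow{\ \psi\ }\kA_{1}\hookrightarrow\kF_{1}\hookrightarrow\kF,
\]
where $\varphi\ne0$ by Lemma~\ref{L:small-slope} and $\psi\ne0$ by (a dual application of) Lemma~\ref{L:omega}. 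This composite is nonzero: $\varphi$ is generically surjective onto a torsion-free sheaf, hence injective after restricting to a dense open, $\psi$ is nonzero between torsion-free sheaves so generically injective, and $\kA_1\hookrightarrow\kF$; thus the composite is nonzero at the generic point when $\psi\varphi\ne0$, which holds since $\operatorname{Hom}$ into a torsion-free sheaf and out of a torsion-free sheaf cannot kill a generically nonzero map unless one factor is zero. On the other hand the composite is not an isomorphism because it factors through the rank-$\rk(\kA_k)$ sheaf $\kA_k$ (or through $\kA_1$) when $k\ge2$, which has rank strictly smaller than $\rk(\kF)$; and $k\ge2$ is guaranteed since $\mu(\kF)\ne\mu_{\text{min}}(\kF)$ under our hypothesis ($g^{2}+4g\ge0$, and if it equals $0$ then $g=0$ and one checks the bound trivially or notes $\kF$ is semi-stable). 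This contradicts $\Hom_E(\kF,\kF)=\CC$, since a scalar endomorphism is either zero or an isomorphism.

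The main obstacle I anticipate is the bookkeeping that makes the two slope inequalities hold simultaneously for one and the same choice of $(\kL,q)$: one needs $\mu(\kA_k)<-q$ and $\mu(\kA_1)>2g-2+q$ (so that Serre duality gives $\Hom(\omega_E\otimes\kL^{-q},\kA_1)^{*}=H^{1}(\dots)\ne0$ via the $\chi<0$ trick, paralleling Lemma~\ref{L:small-slope}), and these two together demand exactly $\mu(\kA_1)-\mu(\kA_k)>2g-2+2q=2g-2+2(g^{2}+4g-2)$, which is $2g^{2}+10g-6$ — too large. So the naive version overshoots, and the real argument must instead relate $\mu(\kF)$ (not $\mu_{\text{max}}$) to $\mu_{\text{min}}$: the correct use of simplicity is likely $\Hom(\kF,\kA_k)\ne0$ automatically (the HNF quotient map) together with $\Hom(\kA_k,\kF)\ne0$ obtained from the \emph{single} chain $\kA_k\to\omega_E\otimes\kL^{-q}\to\kL^{q'}\hookrightarrow$ (a subsheaf of) $\kF$ using Lemma~\ref{L:omega} once and only needing $\mu(\kA_k)$ very negative, i.e.\ only one inequality, namely $\mu(\kF)-\mu_{\text{min}}(\kF)>g^2+4g$ rearranged via $\mu(\kF)=d/n$ and Lemma~\ref{C:chi-conductor}. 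Pinning down that this single composite lands inside $\kF$ and is non-invertible — that is where the genus bound $g^{2}+4g$ (rather than $g^{2}+4g-2$) precisely comes from — is the crux, and I would spend the bulk of the proof there.
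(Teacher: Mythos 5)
There is a genuine gap, and you in fact point to it yourself: you never produce the map back into $\kF$. Your first attempt requires both $\mu(\kA_{k})<-q$ and $\mu(\kA_{1})>2g-2+q$ simultaneously, which, as you correctly compute, would need a slope gap of roughly $2q+2g-2$ and therefore overshoots the claimed bound $g^{2}+4g$; your fallback (a chain $\kA_{k}\to\omega_{E}\otimes\kL^{-q}\to\kL^{q'}\hookrightarrow\kF$) is left unjustified, since there is no reason a high power of $\kL$ should embed into $\kF$. The missing idea in the paper's proof is a normalisation step that makes the return map essentially free: since $\mu(\kF)-\mu_{\text{min}}(\kF)$ is unchanged under twisting by a line bundle (Lemma \ref{L:slope}), one may assume $0<\mu_{\text{max}}(\kF)\le 1$. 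Then $\mu(\kA_{1})>0$ gives $h^{0}(\kA_{1})\ge\chi(\kA_{1})>0$, hence a nonzero morphism $\gamma:\kO\to\kA_{1}\subset\kF$ with no Serre duality and no second slope inequality at all. The only inequality needed is the one on $\mu(\kA_{k})$: assuming $\mu(\kA_{k})<1-4g-g^{2}$, Lemma \ref{L:small-slope} with $q=g^{2}+4g-1$ gives $\alpha:\kA_{k}\to\omega_{E}\otimes\kL^{1-4g-g^{2}}$ nonzero, Lemma \ref{L:omega} gives $\beta:\omega_{E}\otimes\kL^{1-4g-g^{2}}\to\kO$ nonzero, and the chain $\kF\twoheadrightarrow\kA_{k}\stackrel{\alpha}{\lar}\omega_{E}\otimes\kL^{1-4g-g^{2}}\stackrel{\beta}{\lar}\kO\stackrel{\gamma}{\lar}\kA_{1}\subset\kF$ is a nonzero endomorphism because $\beta$ and $\gamma$, being nonzero maps from rank-one torsion free sheaves into torsion free sheaves, are monomorphisms.

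A second, smaller divergence: you want to conclude by exhibiting a non-invertible nonzero endomorphism (arguing via ranks when $k\ge2$), whereas the paper lets simplicity force the chain to consist of isomorphisms, so that $\kF\cong\kO$ and hence $\mu_{\text{min}}(\kF)=1-g\ge 1-4g-g^{2}$, contradicting the assumption; the final bound then follows from $\mu(\kF)\le\mu_{\text{max}}(\kF)\le 1$ together with $\mu_{\text{min}}(\kF)\ge 1-4g-g^{2}$. Your route could be made to work once the chain exists, but without the twisting normalisation (and the resulting section of $\kA_{1}$) the construction of the endomorphism — which you yourself identify as the crux — is not carried out, so the proposal as it stands does not prove the proposition.
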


\begin{proof}
  First observe that Lemma \ref{L:slope} implies that
  $\mu(\kF)-\mu_{\text{min}}(\kF)$ remains unchanged when $\kF$ is twisted by
  a line bundle. After twisting with a line bundle we can always achieve that
  $0<\mu_{\text{max}}(\kF)\le 1$. Hence, we can make this assumption without
  loss of generality. Let $0=\kF_{0}\subset
  \kF_{1}\subset \ldots \subset \kF_{k-1}\subset \kF_{k}=\kF$ be the HNF of
  $\kF$ with semi-stable factors $\kA_{i}=\kF_{i}/\kF_{i-1}$. We have
  $\mu_{\text{max}}(\kF) = \mu(\kA_{1})$ and $\mu_{\text{min}}(\kF) =
  \mu(\kA_{k})$.

  For a proof by contradiction we assume $\mu(\kA_{k})<1-4g-g^{2}$.
  Let $\kL$ be a line bundle of degree one on $E$.
  By Lemma \ref{L:small-slope} with $q=g^{2}+4g-1$ there
  exists a non-vanishing morphism of sheaves $\alpha:\kA_{k}\rightarrow
  \omega_{E}\otimes\kL^{1-4g-g^{2}}$. From Lemma \ref{L:omega} with
  $q=g^{2}+4g-1>g^{2}+4g-2$ we obtain a non-zero morphism
  $\beta:\omega_{E}\otimes\kL^{1-4g-g^{2}} \rightarrow \kO$. Finally, because
  we assumed $\mu(\kA_{1}) = \mu_{\text{max}}(\kF)>0$, we have
  $h^{0}(\kA_{1})\ge \chi(\kA_{1}) >0$ and so there exists a non-zero morphism
  $\gamma:\kO\rightarrow \kA_{1}$.
  Together with the epimorphism $\kF\twoheadrightarrow\kA_{k}$ and the inclusion
  $\kA_{1}\subset\kF$, we obtain a chain of non-zero morphisms
  \begin{equation}
    \label{eq:chain}
    \kF\twoheadrightarrow\kA_{k}
    \stackrel{\alpha}{\lar}\omega_{E}\otimes\kL^{1-4g-g^{2}}
    \stackrel{\beta}{\lar}\kO
    \stackrel{\gamma}{\lar}\kA_{1}\subset\kF.
  \end{equation}
  Because any morphism from a torsion free rank one sheaf to a torsion free
  sheaf is automatically a monomorphism, $\beta$ and $\gamma$ are monomorphism
  and so the composition \eqref{eq:chain} is a non-zero endomorphism of
  $\kF$. As $\kF$ was assumed to be simple, this composition must be an
  isomorphism and so also each morphism in the chain \eqref{eq:chain}.
  In particular, $\kF\cong\kO$ and so
  $\mu_{\text{min}}(\kF) = \mu(\kF) = \chi(\kO) = 1-g \ge 1-4g-g^{2}$. This
  contradicts our assumption $\mu(\kA_{k})<1-4g-g^{2}$, thus proves that we
  have $\mu_{\text{min}}(\kF)=\mu(\kA_{k})\ge 1-4g-g^{2}$. Using
  $\mu(\kF)\le\mu_{\text{max}}(\kF)\le 1$, we obtain
  $\mu(\kF)-\mu_{\text{min}}(\kF)\le g^{2}+4g$, as required.
\end{proof}

\begin{proof}[Proof of Theorem \ref{T:vanishing}]
  Let $E$, $\kF$ and $\kL$ be as in the formulation of the theorem.
  If the simple sheaf $\kF$ is not torsion free, the torsion subsheaf of $\kF$
  is non-zero and so contains a rank one sky-scraper $\kk_{s}$ supported at some
  point $s\in E$. On the other hand, by restricting to $s$ we always get an
  epimorphism $\kF\twoheadrightarrow \kk_{s}$. The composition
  $\kF\twoheadrightarrow \kk_{s} \subset \kF$ is a non-trivial endomorphism,
  hence an isomorphism. In particular, $\kF \cong \kk_{s}$ is a torsion
  sheaf. In this case, the claimed vanishing is trivial for all
  $m\in\mathbb{Z}$, because the support of $\kF\otimes\kL^{\otimes m}$ is of dimension
  zero.

  For the rest of the proof we assume that $\kF$ is a simple torsion free sheaf.
  Let $0=\kF_{0}\subset \kF_{1}\subset \ldots \subset \kF_{k-1}\subset
  \kF_{k}=\kF$ be the HNF of $\kF$ with semi-stable factors
  $\kA_{i}=\kF_{i}/\kF_{i-1}$. Recall that $\mu(\kF)=\frac{d}{n}$.
  Let $m_{0}=\lfloor g^{2}+5g-\frac{d}{n}\rfloor$ and $m\ge m_{0}$.
  Because $\mu(\omega_{E}\otimes\kL^{\otimes (-m)}) = g-1-m \le g-1-m_{0} <
  g-1-(g^{2}+5g-1-\mu(\kF)) = \mu(\kF)-(g^{2}+4g)\le \mu_{\text{min}}(\kF)$ by
  Proposition \ref{P:difference}, the semi-stability of the factors $\kA_{i}$
  implies
  $\Hom(\kA_{i}, \omega_{E}\otimes\kL^{\otimes (-m)}) = 0$. From the exact sequences
  \(0\rightarrow\kF_{i-1}\rightarrow\kF_{i}\rightarrow\kA_{i}\rightarrow 0\)
  we obtain then $\Hom(\kF, \omega_{E}\otimes\kL^{\otimes{(-m)}}) = 0$. Serre-duality
  implies now
  \(H^{1}(\kF\otimes\kL^{\otimes m})
  = \Hom(\kF\otimes\kL^{\otimes m},\omega_{E})^{\ast}
  = \Hom(\kF, \omega_{E}\otimes\kL^{\otimes{(-m)}})^{\ast} = 0\), as required.
\end{proof}

\begin{remark}
  A moduli problem for coherent sheaves in the category of schemes will be
  bounded if the Castelnuovo-Mumford regularity of these sheaves is bounded.
  The corresponding moduli space is then of finite type or even
  quasi-projective. The significance of Theorem \ref{T:vanishing} in the
  present situation is highlighted in Remark \ref{R:bounded}.
\end{remark}

\section{Proof of the main theorem}
\label{S:classical}

The family $p:X\rightarrow S$ is assumed to satisfy the conditions formulated
in the Introduction. In addition to the notation introduced there we denote by
$\Sigma = i(S)$ the image of the section $i$, which is a divisor on $X$. For
any object $T\rightarrow S$ in $\Ans_{S}$, we continue to denote the
projection by $q:X_{T}\rightarrow T$ and denote by $\Sigma_{T}\subset X_{T}$
the induced section. For any coherent sheaf $\kF$ on $X_{T}$ we write
$\kF(m):=\kF\otimes\kO(m\Sigma_{T})$.
\begin{theorem}\label{T:main}
  If $n$ and $d$ are coprime, then the functor $\underline{\Mf}^{(n,d)}_{X/S}$
  is representable.
\end{theorem}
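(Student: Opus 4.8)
The plan is to represent the functor $\underline{\Mf}^{(n,d)}_{X/S}$ by combining the classical representability results for the sheafified functor with the genuine sheaf property, the latter being the new input. First I would recall that Altman--Kleiman \cite{AK1} and Kosarew--Okonek \cite{KO} provide, for a flat projective family with a section, a moduli space $M$ together with a universal sheaf étale-locally (respectively analytically-locally) on $M$; equivalently, the sheafification $(\underline{\Mf}^{(n,d)}_{X/S})^{+}$ of our functor in the analytic topology on $\Ans_S$ is representable. Thus the whole problem reduces to showing that the natural transformation $\underline{\Mf}^{(n,d)}_{X/S}\rightarrow(\underline{\Mf}^{(n,d)}_{X/S})^{+}$ is already an isomorphism, i.e.\ that $\underline{\Mf}^{(n,d)}_{X/S}$ satisfies descent for open analytic covers. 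Injectivity is the easy half: if $\kF_1,\kF_2\in\Coh(X_T)$ restrict to $\sim$-equivalent sheaves over each member $T_i$ of an open cover of $T$, then on overlaps the gluing isomorphisms differ by a cocycle in $\kO_{X_{T_{ij}}}^{*}$; by Lemma \ref{L:simple} this cocycle descends to a cocycle in $\kO_{T_{ij}}^{*}$, which defines a line bundle $\kL\in\Pic(T)$ with $\kF_1\cong\kF_2\otimes q^{*}\kL$, so $\kF_1\sim\kF_2$ globally.

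Surjectivity is the heart of the matter and where Theorem \ref{T:vanishing} enters. Suppose we are given an open cover $\{T_i\}$ of $T$ and sheaves $\kF_i\in\underline{\Mf}^{(n,d)}_{X/S}(T_i)$ that agree on overlaps up to $\sim$. Choosing representatives, we get on each double overlap $T_{ij}$ an isomorphism $\varphi_{ij}:\kF_i|_{X_{T_{ij}}}\stackrel{\sim}{\lar}\kF_j|_{X_{T_{ij}}}\otimes q^{*}\kL_{ij}$ for some $\kL_{ij}\in\Pic(T_{ij})$; on triple overlaps these satisfy the cocycle condition only up to a unit, again because of Lemma \ref{L:simple} the ambiguity is controlled. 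The obstruction to gluing the $\kF_i$ into a global sheaf on $X_T$ (after possibly replacing each $\kF_i$ by a twist $\kF_i\otimes q^{*}\kM_i$) therefore lies in $H^{2}(T,\kO_T^{*})$, i.e.\ the analytic Brauer class. To kill this class I would use the section $\Sigma_T$ and Theorem \ref{T:vanishing}: twisting by $\kO(m\Sigma_T)$ for $m$ large enough — here $m_0$ depends only on $g,n,d$ via Theorem \ref{T:vanishing}, hence works uniformly over all fibres of the family — we may assume $R^{1}q_{*}(\kF_i(m))=0$ and $q_{*}(\kF_i(m))$ is locally free of rank $N=d+mn\rk$ (the latter using cohomology and base change together with the vanishing). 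The point is that $\kF_i(m)$ is then globally generated relative to $q$ and its pushforward $\kE_i:=q_{*}(\kF_i(m))$ is a vector bundle of a fixed rank $N$; the isomorphisms $\varphi_{ij}$ induce isomorphisms $\kE_i|_{T_{ij}}\cong\kE_j|_{T_{ij}}\otimes\kL_{ij}$, so $\det\kE_i\otimes\kL_{ij}^{\otimes N}\cong\det\kE_j|_{T_{ij}}$. Replacing $\kF_i$ by $\kF_i\otimes q^{*}(\det\kE_i)^{-1}$ trivialises the determinant discrepancy — and here is where coprimality of $n$ and $d$ is used: from $\gcd(n,N)=\gcd(n,d)=1$ we can solve for integers so that a suitable combination of twists by $\det\kE_i$ and by the original $\kL_{ij}$ produces honest transition \emph{isomorphisms} $\kF_i|_{X_{T_{ij}}}\cong\kF_j|_{X_{T_{ij}}}$ satisfying the strict cocycle condition on triple overlaps. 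These then glue to a global $T$-flat coherent sheaf $\kF$ on $X_T$ restricting to each $\kF_i$ up to $\sim$, proving surjectivity.

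Finally I would assemble the pieces: since $\underline{\Mf}^{(n,d)}_{X/S}$ is a sheaf in the analytic topology and its sheafification is representable by a complex space $M$ over $S$, the functor itself is represented by $M$. I expect the main obstacle to be the bookkeeping in the surjectivity step — keeping track of the various $\Pic$-twists, verifying that the twisted transition maps satisfy the genuine cocycle condition rather than merely a cocycle-up-to-units condition, and invoking coprimality at exactly the right place. The uniformity of the Castelnuovo--Mumford regularity bound supplied by Theorem \ref{T:vanishing} is precisely what makes the pushforward construction work simultaneously for all $\kF_i$ and over all fibres, so that the rank $N$ and the vanishing $R^{1}q_{*}\kF_i(m)=0$ can be achieved by a single choice of $m$ depending only on $g,n,d$.
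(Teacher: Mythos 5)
Your overall architecture coincides with the paper's: reduce everything to showing that $\underline{\Mf}^{(n,d)}_{X/S}$ is a sheaf in the analytic topology and then quote Kosarew--Okonek (Theorem 6.4 and Remark 6.7 of \cite{KO}); get separatedness from simplicity via Lemma \ref{L:simple}; and attack the gluing obstruction, a class $\overline{\alpha}\in H^{2}(T,\kO_{T}^{\ast})$ coming from the scalar cocycle defect $\alpha_{ijk}$ of the $\varphi_{ij}$, by means of determinants of pushforwards, with Theorem \ref{T:vanishing} supplying the uniform twist $m$ that makes $q_{\ast}(\kF_{i}(m))$ locally free of rank $N=d+mn$. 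Up to that point you are on the paper's track.

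The decisive arithmetic step, however, is missing. Under the scalar action, $\det\mathcal{E}_{i}$ with $\mathcal{E}_{i}=q_{\ast}(\kF_{i}(m))$ transforms with weight $N$ (a unit $\lambda$ acting on $\kF_{i}$ acts as $\lambda^{N}$ on $\det\mathcal{E}_{i}$), so the collection $\{\det\mathcal{E}_{i}\}$ with the induced maps is an $\alpha^{N}$-twisted line bundle; this only proves that $\overline{\alpha}^{\,N}=1$, and since $H^{2}(T,\kO_{T}^{\ast})$ has torsion in general, that does not give $\overline{\alpha}=1$. Equivalently, tensoring the $q^{\ast}\alpha$-twisted sheaf $\{\kF_{i},\varphi_{ij}\}$ with powers of $q^{\ast}(\det\mathcal{E}_{i})$ changes its weight by multiples of $N$ and can never reach weight $0$ unless $N=\pm 1$; in particular ``replacing $\kF_{i}$ by $\kF_{i}\otimes q^{\ast}(\det\mathcal{E}_{i})^{-1}$'' trivialises nothing. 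The coprimality you invoke, $\gcd(n,N)=1$, cannot be exploited because you never construct a second twisted line bundle of weight $n$ (or of any weight coprime to $N$): the bundles $\kL_{ij}$ exist only on overlaps and carry no weight, so there is nothing to combine with $\det\mathcal{E}_{i}$ via B\'ezout. This is exactly the gap the paper closes by using \emph{two} twists: $q_{\ast}\kF(m)$ and $q_{\ast}\kF(m+1)$ have ranks $d+mn$ and $d+(m+1)n$, which are coprime precisely because $\gcd(n,d)=1$, and with $a(d+mn)+b(d+(m+1)n)=1$ the bundle $\mathbb{L}_{T}(\kF)=\det\bigl(q_{\ast}\kF(m)\bigr)^{\otimes a}\otimes\det\bigl(q_{\ast}\kF(m+1)\bigr)^{\otimes b}$ is an $\alpha$-twisted line bundle of weight exactly $1$ (Lemma \ref{L:functor}), whose pulled-back dual untwists the glued data; alternatively, Remark \ref{R:alternative} pairs $q_{\ast}(\kF|_{\Sigma_{T}})$ (weight $n$) with $\det\mathbf{R}q_{\ast}\kF$ (weight $d$), which is closer to the coprimality you wanted to use. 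Without some such weight-one twisted line bundle, your twisted transition maps cannot be corrected to satisfy the strict cocycle condition, and the surjectivity step fails.
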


In order to prove the existence of a fine moduli space $\Mf^{(n,d)}_{X/S}$ we
apply a result of Kosarew and Okonek \cite{KO}. If we can show that the
functor $\underline{\Mf}^{(n,d)}_{X/S}$ is a sheaf in the analytic topology,
it follows from their Theorem 6.4 and Remark 6.7 that this functor is
representable.

The separatedness of the functor $\underline{\Mf}^{(n,d)}_{X/S}$, which is
part of the sheaf property, means that for any morphism $T\rightarrow S$,
two families $\kF, \kG \in \underline{\Mf}^{(n,d)}_{X/S}(T)$ coincide if
they do so locally. More precisely, this means that if for an open cover
$\{T_{i}\}$ of $T$ we have $\kF|_{X_{T_{i}}} \sim \kG|_{X_{T_{i}}}$,
then $\kF \sim \kG$.
The following lemma
implies that the functor $\underline{\Mf}^{(n,d)}_{X/S}$ is separated.
The equivalence relation $\sim$ corresponds precisely to the
construction of a separated functor, see \cite[Remark (6.7)]{KO}, or
\cite[Section 2.3]{FGA}.

\begin{lemma}\label{L:wellknown}
  Let $p:X \lar S$ be a proper and flat morphism of complex spaces,
  $\kF$ and $\kG$ two $S$-flat coherent sheaves on $X$ such that for all
  points $s \in S$
  \begin{enumerate}
  \item[(i)] $\kF_{s}$ is simple;
  \item[(ii)] there exists an open neighbourhood $U$ of $s\in S$ such that
    $\kF_{U} \cong \kG_{U}$;
  \end{enumerate}
  then there exists a line bundle
  $\kL \in \Pic(S)$ such that $\kG \cong \kF \otimes p^*\kL$.
\end{lemma}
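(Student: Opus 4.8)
The plan is to exploit Lemma~\ref{L:simple}, which tells us that
$p_{\ast}\ShHom_{X}(\kF,\kF)\cong\kO_{S}$ for the simple family $\kF$, in order
to produce a canonical gluing datum for the local isomorphisms and then invoke
descent for line bundles. First I would reformulate the problem sheaf-theoretically:
the isomorphisms $\varphi_{U}:\kF_{U}\xrightarrow{\sim}\kG_{U}$ from hypothesis~(ii),
together with their overlaps, assemble into a \v{C}ech $1$-cocycle for the sheaf
of units of $p_{\ast}\ShHom_{X}(\kF,\kF)$, hence for $\kO_{S}^{\ast}$, once one
checks that the transition automorphisms are scalars. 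This is where simplicity
is used: on an overlap $U\cap V$, the composition $\varphi_{V}^{-1}\varphi_{U}$
is an automorphism of $\kF_{U\cap V}$, and by Lemma~\ref{L:simple} applied over
$U\cap V$ it lies in $\Gamma(U\cap V,\kO_{S}^{\ast})$, i.e.\ it is multiplication
by an invertible holomorphic function. (One should be slightly careful here:
$\kF_{U\cap V}$ is simple on fibres, so Lemma~\ref{L:simple} does apply, giving
$\End(\kF_{U\cap V})=\Gamma(U\cap V,\kO_{S})$, and an endomorphism which is an
isomorphism must be a unit.)

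Next I would check the cocycle condition. On a triple overlap the relation
$(\varphi_{W}^{-1}\varphi_{V})(\varphi_{V}^{-1}\varphi_{U})=\varphi_{W}^{-1}\varphi_{U}$
holds in $\End(\kF)$, and since each factor is a scalar function, this is exactly
the statement that $g_{UV}:=\varphi_{V}^{-1}\varphi_{U}\in\kO_{S}^{\ast}$ satisfies
$g_{VW}g_{UV}=g_{UW}$. Thus $(g_{UV})$ defines a class in
$H^{1}(S,\kO_{S}^{\ast})=\Pic(S)$; call the corresponding line bundle $\kL$.
By construction, $\kL$ is trivialised on each $U_{i}$ by a section $s_{i}$ with
$s_{j}=g_{ij}s_{i}$, so the maps $\varphi_{i}\otimes s_{i}^{-1}:\kF_{U_{i}}\otimes
p^{\ast}\kL|_{U_{i}}\to\kG_{U_{i}}$ agree on overlaps: on $U_{i}\cap U_{j}$ one has
$\varphi_{i}\otimes s_{i}^{-1}=\varphi_{j}g_{ij}\otimes(g_{ij}^{-1}s_{j})^{-1}
=\varphi_{j}\otimes s_{j}^{-1}$ after cancelling the scalar. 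Hence they glue to a
global isomorphism $\kF\otimes p^{\ast}\kL\xrightarrow{\sim}\kG$, which is the claim.

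The main obstacle I anticipate is the rigorous verification that
$\varphi_{V}^{-1}\varphi_{U}$ really is a \emph{global} section of $\kO_{S}^{\ast}$
over $U\cap V$ and not merely fibrewise a scalar. This needs the full strength of
Lemma~\ref{L:simple}, i.e.\ the identification of $p_{\ast}\ShHom_{X}(\kF,\kF)$
with $\kO_{S}$ as sheaves, applied over the open set $U\cap V\subset S$ (using that
the hypotheses of that lemma are local on $S$). Once that identification is in hand,
$\End_{X_{U\cap V}}(\kF_{U\cap V})=\Gamma(U\cap V,\kO_{S})$ and the units are
exactly $\Gamma(U\cap V,\kO_{S}^{\ast})$; an automorphism is a unit because its
inverse is again a global endomorphism, hence again a global function. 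A secondary
point worth stating carefully is the passage from the \v{C}ech class to an honest
line bundle $\kL\in\Pic(S)$ and the compatibility of the chosen local trivialisations
with the $\varphi_{i}$; this is formal but should be spelled out so that the final
gluing is unambiguous. No properness or curve-specific input beyond Lemma~\ref{L:simple}
is needed here, so the argument is essentially a descent computation.
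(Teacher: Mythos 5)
Your argument is correct in substance, but it takes a genuinely different route from the paper's. The paper makes no choices at all: it sets $\kL := p_{*}\ShHom_X(\kF,\kG)$, uses (i), (ii) and Lemma \ref{L:simple} to see that this sheaf is locally isomorphic to $\kO_S$ (hence a line bundle), observes that $\Hom_{X_s}(\kF_s,\kG_s)$ is one-dimensional and generated by an isomorphism, so that the canonical evaluation morphism $\kF\otimes p^{*}p_{*}\ShHom_X(\kF,\kG)\lar\kG$ is an isomorphism on every fibre, and finally uses the $S$-flatness of $\kG$ to promote this fibrewise statement to an isomorphism of sheaves. You instead choose local isomorphisms $\varphi_i$ over a cover $\{U_i\}$ of $S$ provided by (ii), use Lemma \ref{L:simple} over the overlaps $U_{ij}$ (legitimate, since all hypotheses restrict to open subsets of $S$, and the identification $\End_{X_{U_{ij}}}(\kF_{U_{ij}})\cong\Gamma(U_{ij},\kO_S)$ turns automorphisms into units) to get a \v{C}ech cocycle $g_{ij}$ in $\kO_S^{\ast}$, and glue. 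What your route buys is that the final step is pure descent: no fibrewise evaluation argument and no use of the flatness of $\kG$ beyond what Lemma \ref{L:simple} already needs for $\kF$; what the paper's route buys is a canonical, choice-free identification of $\kL$ as $p_{*}\ShHom_X(\kF,\kG)$, in the same spirit as the later cocycle bookkeeping in the proof of Proposition \ref{P:sheaf}.

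One bookkeeping slip in your last verification: with $g_{ij}=\varphi_j^{-1}\varphi_i$ and local frames satisfying $s_j=g_{ij}s_i$, one has $s_i^{-1}=g_{ij}\,s_j^{-1}$ and $\varphi_i=\varphi_j\, g_{ij}$, so $\varphi_i\otimes s_i^{-1}=g_{ij}^{2}\,(\varphi_j\otimes s_j^{-1})$; the scalars compound rather than cancel. The fix is to take the inverse cocycle (define $g_{ij}=\varphi_i^{-1}\varphi_j$) or, equivalently, to tensor with the dual of the line bundle you built. Since the lemma only asserts the existence of some $\kL\in\Pic(S)$, this does not affect the validity of your approach.
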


\begin{proof}
  Assumption (ii), in which $\kF_{U}, \kG_{U}$ denote restrictions to the open
  subset $X_{U}\subset X$, implies that for any $s \in S$ there exists an
  isomorphism $\varphi: \kF_{s} \rightarrow \kG_{s}$ which
  generates the one-dimensional vector space $\Hom_{X_s}(\kF_{s},\kG_{s})$. We
  obtain a commutative diagram
  \[
  \xymatrix
  {
    \kF_{s} \ar[rr] \ar[d]_{\varphi}
    & & \kF_{s} \otimes
    \bigl(\Hom_{X_s}(\kF_{s},\kG_{s}) \otimes_{\CC} \kO_{X_{s}}\bigr)
    \ar[d]^{\id \otimes \ev}
    \\
    \kG_{s}  & &  \kF_{s} \otimes \ShHom_{X_s}(\kF_{s},
    \kG_{s}) \ar[ll]_(.64){\ev}
  }
  \]
  in which the upper horizontal arrow is the isomorphism induced by
  $\varphi$. This implies that the canonical morphism
  \[
  \kF_{s} \otimes \bigl(\Hom_{X_s}(\kF_{s},
  \kG_{s}) \otimes_{\CC} \kO_{X_s}\bigr) \lar
  \kG_{s}
  \]
  is an isomorphism for all $s \in S$.

  Because
  $p_*\ShHom_X(\kF, \kG)|_{U} \cong p_{U\ast}\ShHom_{X_{U}}(\kF_{U},
  \kG_{U})$,
  where $p_{U}:X_{U}\rightarrow U$ is the restriction of $p$ to $U\subset S$,
  assumptions (i) and (ii) and Lemma \ref{L:simple} imply that
  $p_*\ShHom_X(\kF, \kG)|_{U} \cong \kO_{U}$. This means that
  $\kL : = p_*\ShHom_X(\kF, \kG)$ is a line bundle on $S$.
  Thus, the following composition of canonical morphisms of sheaves on $X$
  \[
  \kF \otimes p^* p_* \ShHom_X(\kF, \kG) \lar
  \kF \otimes \ShHom_X(\kF, \kG) \lar \kG
  \]
  is an isomorphism on all fibres $X_s$. Because $\kG$ is $S$-flat, this
  is sufficient to conclude that $\kF \otimes p^* \kL \lar \kG$ is an
  isomorphism.
\end{proof}

We shall see below that the sheaf property will follow if we can show that
there exists a certain twisted line bundle on $T$.
For its construction we denote
\[
\widetilde{\Mf}^{(n,d)}_{X/S}(T) =
\left\{
\kF \in \Coh(X_T)\left|
\begin{array}{l}
\kF \text{ is } T\text{-flat, } \kF_{t} \text{ is simple}\\
\rk(\kF_{t})=n, \chi(\kF_{t})=d \quad \forall t \in T
\end{array}
\right.\right\}
\subset\Coh(X_{T})\,,
\]
so that $\underline{\Mf}^{(n,d)}_{X/S}(T) =
\widetilde{\Mf}^{(n,d)}_{X/S}(T)/\sim\;$.
We use functors
$\mathbb{L}_{T}:\widetilde{\Mf}^{(n,d)}_{X/S}(T)\rightarrow\Coh(T)$ which are
defined as follows.
Fix an integer $m\ge m_{0}$ (see Theorem \ref{T:vanishing}) and let
$a,b$ be integers such that $a(d+mn)+b(d+(m+1)n)=1$. Such
integers exist if $\gcd(n,d)=1$. For any $T\in\Ans_{S}$ and
$\kF\in\widetilde{\Mf}^{(n,d)}_{X/S}(T)$ we define
\[\mathbb{L}_{T}(\kF) = \bigl(\det\left(q_{\ast}\kF(m)\right)\bigr)^{\otimes a}
\otimes \bigl(\det\left(q_{\ast}\kF(m+1)\right)\bigr)^{\otimes b}\,.\]

\begin{lemma}\label{L:functor}
  The functors
  $\mathbb{L}_{T}:\widetilde{\Mf}^{(n,d)}_{X/S}(T)\rightarrow\Coh(T)$ have the
  following properties.
  \begin{itemize}
  \item[(a)] If $\kF\in\widetilde{\Mf}^{(n,d)}_{X/S}(T)$, then
    $\mathbb{L}_{T}(\kF)\in\Pic(T)$.
  \item[(b)] For $\lambda\in H^{0}(\kO_{T})$ and
    $\kF\in\widetilde{\Mf}^{(n,d)}_{X/S}(T)$ we have
    $\mathbb{L}_{T}(q^{\ast}(\lambda)\cdot\id_{\kF})
    = \lambda\cdot\id_{\mathbb{L}_{T}(\kF)}$.
  \item[(c)] If $u:T'\subset T$ is an open subset and $v:X_{T'}\subset X_{T}$,
    then $u^{\ast}\circ\mathbb{L}_{T} \cong \mathbb{L}_{T'}\circ v^{\ast}$.
  \end{itemize}
\end{lemma}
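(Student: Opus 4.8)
The plan is to verify the three properties by a combination of base-change, the vanishing theorem, and the construction of the determinant functor.

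For (a), the key point is that $q_{\ast}\kF(m)$ and $q_{\ast}\kF(m+1)$ are locally free of the correct rank on $T$. First I would observe that $\kO(m\Sigma_{T})$ is relatively ample for $q$, since $\Sigma_{T}$ is a section landing in the smooth locus; together with the fact that $\kF$ is $T$-flat, this allows us to use cohomology and base change. Restricting to a fibre $X_{t}$, the sheaf $\kF_{t}$ is simple with $\rk(\kF_{t})=n$ and $\chi(\kF_{t})=d$, and $\kO(m\Sigma_{t})$ is a line bundle of degree one on the integral curve $X_{t}$. By Theorem \ref{T:vanishing}, applied with $\kL=\kO(\Sigma_{t})$ and using $m\ge m_{0}=m_{0}(g,n,d)$, we get $H^{1}(X_{t},\kF_{t}(m))=0$, and the same for $m+1$. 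Hence the function $t\mapsto h^{0}(X_{t},\kF_{t}(m))$ is constant, equal to $\chi(\kF_{t}(m))=d+mn$ by Lemma \ref{L:slope} (more precisely equation \eqref{eq:chi} applied to $\kF_{t}$ and $\kL^{\otimes m}$), and likewise $h^{0}(X_{t},\kF_{t}(m+1))=d+(m+1)n$. Standard base-change (Grauert's theorem in the analytic category) then gives that $q_{\ast}\kF(m)$ and $q_{\ast}\kF(m+1)$ are locally free of ranks $d+mn$ and $d+(m+1)n$ respectively, and their formation commutes with arbitrary base change $T'\to T$. Taking determinants yields line bundles, and the tensor product of the $a$-th and $b$-th powers is again a line bundle on $T$, so $\mathbb{L}_{T}(\kF)\in\Pic(T)$.

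For (c), the statement is now essentially a consequence of the base-change compatibility just established: for an open immersion $u:T'\subset T$ the pullback $v^{\ast}\kF$ is just the restriction of $\kF$ to $X_{T'}$, and since $u^{\ast}q_{\ast}\kF(m)\cong q'_{\ast}v^{\ast}\kF(m)$ (here $q':X_{T'}\to T'$), and $u^{\ast}$ commutes with $\det$ and with tensor powers, we obtain a canonical isomorphism $u^{\ast}\mathbb{L}_{T}(\kF)\cong\mathbb{L}_{T'}(v^{\ast}\kF)$. I would spell out that $\det$ of a locally free sheaf commutes with flat pullback, which makes the isomorphism natural in $\kF$, so we really get an isomorphism of functors.

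For (b), the multiplication endomorphism $q^{\ast}(\lambda)\cdot\id_{\kF}$ of $\kF$ induces, after applying $q_{\ast}(\ARG(m))$, the endomorphism $\lambda\cdot\id$ of the rank-$(d+mn)$ bundle $q_{\ast}\kF(m)$; under $\det$ this becomes multiplication by $\lambda^{d+mn}$, and similarly $\det(q_{\ast}\kF(m+1))$ receives multiplication by $\lambda^{d+(m+1)n}$. Taking the $a$-th and $b$-th tensor powers and multiplying, the total scalar on $\mathbb{L}_{T}(\kF)$ is $\lambda^{a(d+mn)+b(d+(m+1)n)}=\lambda^{1}=\lambda$, using the defining relation $a(d+mn)+b(d+(m+1)n)=1$. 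This is exactly the claimed identity. I expect the only mildly delicate point throughout to be the careful invocation of base change in the analytic (rather than algebraic) setting and the verification that $\kO(m\Sigma_{T})$ is relatively ample so that Theorem \ref{T:vanishing} applies fibrewise; once the locally free bundles $q_{\ast}\kF(m)$ are in hand with their base-change property, properties (a), (b) and (c) are formal consequences of the behaviour of determinants under pullback and under scalar endomorphisms.
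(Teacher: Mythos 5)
Your proposal is correct and follows essentially the same route as the paper: fibrewise vanishing of $H^{1}$ via Theorem \ref{T:vanishing} applied to the degree-one line bundle $\kO(\Sigma_{t})$, then analytic base change (the paper cites \cite[III.3.9]{BS}) to get local freeness of $q_{\ast}\kF(m)$ and $q_{\ast}\kF(m+1)$ of ranks $d+mn$ and $d+(m+1)n$, with (b) and (c) following formally from the behaviour of $\det$ under scalar endomorphisms and pullback. The appeal to relative ampleness of $\kO(m\Sigma_{T})$ is not needed and is the only superfluous step.
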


\begin{proof}
  (a) By Theorem \ref{T:vanishing} we have
  $H^{1}(\kF(m)_{t})=H^{1}(\kF(m+1)_{t})=0$ for all $t\in T$.
  From \cite[III.3.9]{BS} we obtain  that $q_{\ast}\kF(m)$
  and $q_{\ast}\kF(m+1)$ are locally free, and the result follows.
  Moreover, it follows that the ranks of these locally free sheaves are equal
  to $\chi(\kF(m)_{t})=d+mn$ and $\chi(\kF(m+1)_{t})=d+(m+1)n$, respectively.
  This will be used in the next part of the proof.

  (b) The functor $q_{\ast}$ sends $q^{\ast}(\lambda)\cdot\id_{\kF(m)}$ to
  $\lambda\cdot\id_{q_{\ast}\kF(m)}$ which is sent to $\lambda^{d+mn}\cdot\id$ by
  the functor $\det$.
  Therefore, $\mathbb{L}_{T}(q^{\ast}(\lambda)\cdot\id_{\kF})
    = \lambda^{a(d+mn)+b(d+(m+1)n)}\cdot\id_{\mathbb{L}_{T}(\kF)}
    = \lambda\cdot\id_{\mathbb{L}_{T}(\kF)}$.

  (c) This is clear because tensor product and taking the determinant is
  compatible with pull-back and, if $q'$ denotes the restriction of $q$ to
  $X_{T'}$,  $u^{\ast}\circ q_{\ast} = q'_{\ast}\circ v^{\ast}$ holds.
\end{proof}

\begin{proposition}\label{P:sheaf}
  The functor $\underline{\Mf}^{(n,d)}_{X/S}$ is a sheaf in the analytic
  topology.
\end{proposition}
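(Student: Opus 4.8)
The plan is to verify the two parts of the sheaf axiom — separatedness (already established by Lemma \ref{L:wellknown}) and gluing — with the gluing part being the substance. So let $\{T_i\}$ be an open cover of some $T\in\Ans_S$ and suppose we are given families $\kF_i\in\widetilde{\Mf}^{(n,d)}_{X/S}(T_i)$ together with isomorphisms on the overlaps: $\varphi_{ij}:\kF_i|_{X_{T_{ij}}}\xrightarrow{\sim}\kF_j|_{X_{T_{ij}}}$ (where $T_{ij}=T_i\cap T_j$), compatible with the equivalence relation $\sim$, i.e.\ the classes $[\kF_i]$ and $[\kF_j]$ agree in $\underline{\Mf}^{(n,d)}_{X/S}(T_{ij})$. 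The goal is to produce $\kF\in\widetilde{\Mf}^{(n,d)}_{X/S}(T)$ whose class restricts to $[\kF_i]$ on each $T_i$. The obstruction to naive gluing is that the $\varphi_{ij}$ need not satisfy the cocycle condition on triple overlaps on the nose — they do so only up to scalars, because each $\kF_s$ is simple — so $\varphi_{jk}\circ\varphi_{ij} = c_{ijk}\cdot\varphi_{ik}$ for some $c_{ijk}\in H^0(\kO_{T_{ijk}}^{\ast})$, and $(c_{ijk})$ is a \v{C}ech $2$-cocycle representing a class in $H^2(T,\kO_T^\ast)$.

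The key device for killing this obstruction is the line bundle functor $\mathbb{L}_T$ of Lemma \ref{L:functor}. First I would observe that because the $\varphi_{ij}$ are isomorphisms, Lemma \ref{L:functor}(c) lets us transport them to isomorphisms $\mathbb{L}_{T_i}(\kF_i)|_{T_{ij}}\xrightarrow{\sim}\mathbb{L}_{T_j}(\kF_j)|_{T_{ij}}$, and by Lemma \ref{L:functor}(b) the scalar $c_{ijk}$ on a triple overlap gets sent to the \emph{same} scalar $c_{ijk}$ acting on $\mathbb{L}_{T_i}(\kF_i)|_{T_{ijk}}$ — here it is crucial that the exponent $a(d+mn)+b(d+(m+1)n)$ equals $1$, which is exactly why coprimality of $n$ and $d$ is invoked. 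Consequently, after choosing (locally, refining the cover) trivialisations $\mathbb{L}_{T_i}(\kF_i)\cong\kO_{T_i}$, the transition functions of the glued line bundle data are precisely the $c_{ijk}$'s boundary, and the point is that the cocycle $(c_{ijk})$ becomes a coboundary: more precisely, one rescales the $\varphi_{ij}$ by sections $\lambda_{ij}\in H^0(\kO_{T_{ij}}^\ast)$ so that $c'_{ijk} = \lambda_{ij}\lambda_{jk}\lambda_{ik}^{-1}c_{ijk} = 1$. Such $\lambda_{ij}$ exist because the line bundles $\mathbb{L}_{T_i}(\kF_i)$ \emph{do} glue — they are honest line bundles on the open sets $T_i$ with genuine isomorphisms on overlaps that \emph{do} satisfy the cocycle condition on triple overlaps (a line bundle isomorphism is determined by a single scalar, and the composite $\mathbb{L}$-isomorphisms are forced to be cocyclic because the original construction is functorial), so their would-be \v{C}ech obstruction vanishes, which translates into solvability of $\lambda_{ij}\lambda_{jk}\lambda_{ik}^{-1}=c_{ijk}^{-1}$.

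With the rescaled $\varphi_{ij}':=\lambda_{ij}\varphi_{ij}$ now satisfying the cocycle condition, the sheaves $\kF_i$ glue to a coherent sheaf $\kF$ on $X_T$. I would then check that $\kF$ lies in $\widetilde{\Mf}^{(n,d)}_{X/S}(T)$: flatness over $T$, simplicity of each fibre, and the numerical invariants $\rk(\kF_t)=n$, $\chi(\kF_t)=d$ are all local conditions on $T$ that hold because they hold for each $\kF_i$. Finally $[\kF]|_{T_i}=[\kF_i]$ by construction, so $\underline{\Mf}^{(n,d)}_{X/S}$ satisfies the gluing axiom; combined with Lemma \ref{L:wellknown} for separatedness, it is a sheaf in the analytic topology.

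\textbf{Main obstacle.} The delicate step is the bookkeeping around the cocycle $(c_{ijk})$ — making precise the claim that the scalars appearing when comparing $\mathbb{L}$-isomorphisms on triple overlaps genuinely match the $c_{ijk}$ (this is where Lemma \ref{L:functor}(b) together with the normalisation $a(d+mn)+b(d+(m+1)n)=1$ does the essential work), and then extracting the rescaling factors $\lambda_{ij}$ from the fact that the $\mathbb{L}_{T_i}(\kF_i)$ form a genuine gluing datum of line bundles. One has to be careful that refining the cover does not destroy the hypotheses (it does not, since everything in sight is stable under restriction by Lemma \ref{L:functor}(c)), and that the choice of $m\ge m_0$ — legitimate by Theorem \ref{T:vanishing} — is made once and for all at the outset so that all the $q_\ast\kF_i(m)$, $q_\ast\kF_i(m+1)$ are locally free of the stated ranks.
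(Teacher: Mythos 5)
Your overall strategy is exactly the paper's: extract the obstruction cocycle $(c_{ijk})$ from simplicity, transport it through the determinant functors $\mathbb{L}$ of Lemma \ref{L:functor} using the normalisation $a(d+mn)+b(d+(m+1)n)=1$, conclude that $(c_{ijk})$ is a coboundary, and then correct the gluing data. However, the justification you give for the crucial coboundary step is internally inconsistent. You first state, correctly, that Lemma \ref{L:functor}(b) sends the scalar $c_{ijk}$ to the \emph{same} scalar acting on $\mathbb{L}_{T_i}(\kF_i)|_{T_{ijk}}$, i.e.\ the transported isomorphisms $\psi_{ij}:=\mathbb{L}_{T_{ij}}(\varphi_{ij})$ satisfy $\psi_{ij}\circ\psi_{jk}\circ\psi_{ki}=c_{ijk}\cdot\id$. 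Two sentences later you claim that the composite $\mathbb{L}$-isomorphisms ``are forced to be cocyclic because the original construction is functorial'', so that the $\mathbb{L}_{T_i}(\kF_i)$ glue to a line bundle on $T$ and the \v{C}ech obstruction therefore vanishes. That is precisely what functoriality does \emph{not} give: it reproduces the same twisted relation with the scalars $c_{ijk}$, so the $\kL_i:=\mathbb{L}_{T_i}(\kF_i)$ do not glue; they constitute a $(c_{ijk})$-twisted line bundle (this is how the paper phrases it). If the $\psi_{ij}$ were honestly cocyclic, the same functoriality would already force $c_{ijk}=1$ and there would be nothing left to prove.

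The correct deduction, which is the one the paper makes, is that the mere existence of this twisted line bundle kills the class: refine the cover so that each $\kL_i$ is trivialised; each $\psi_{ij}$ then becomes multiplication by some $\lambda_{ij}\in H^0(\kO^{\ast})$, and the relation $\psi_{ij}\psi_{jk}\psi_{ki}=c_{ijk}\cdot\id$ reads $\lambda_{ij}\lambda_{jk}\lambda_{ki}=c_{ijk}$, exhibiting $(c_{ijk})$ as a coboundary, so $\overline{c}=1$ in $H^{2}(\kO_{T}^{\ast})$. Your earlier sentence about trivialising and reading off the boundary is essentially this argument, so the fix is short, but as written the paragraph both asserts and denies the cocycle property of the $\psi_{ij}$, and the stated reason for the solvability of $\delta\lambda=c^{-1}$ is wrong. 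Once $\overline{c}=1$, you may rescale the $\varphi_{ij}$ on the refined cover as you propose --- noting that then $\kF|_{X_{T_i}}$ is $\kF_i$ twisted by the line bundle on $T_i$ determined by the $\lambda$'s, which is exactly what the relation $\sim$ permits --- or, as the paper does, tensor the $q^{\ast}c$-twisted sheaf $(\kF_i,\varphi_{ij})$ with the pullback of the dual twisted line bundle. One further small point: to know that the failure of the cocycle condition is multiplication by $q^{\ast}$ of a holomorphic function on $T_{ijk}$, rather than merely a fibrewise scalar, you need Lemma \ref{L:simple}, which should be cited at that step.
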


\begin{proof}
  In order to prove that the separated functor $\underline{\Mf}^{(n,d)}_{X/S}$
  is a sheaf it is sufficient to prove the following.
  For each $T\in\Ans_{S}$, for each open cover $\{T_{i}\}$ of $T$ and any
  $\kF_{i}\in \underline{\Mf}^{(n,d)}_{X/S}(T_{i})$ which satisfy
  $\kF_{j}|_{X_{ij}} \cong \kF_{i}|_{X_{ij}}$ for all
  $i,j$, there exists
  $\kF\in\underline{\Mf}^{(n,d)}_{X/S}(T)$ such that
  $\kF|_{X_{i}} \sim \kF_{i}$. Here we abbreviate
  $X_{i}:=X_{T_{i}}$ and $X_{ij} :=(T_{i}\cap T_{j})\times_{S}X$. More
  explicitly, the equivalence $\kF|_{X_{i}} \sim \kF_{i}$
  means that there exist line bundles $\kL_{i}$ on $T_{i}$ such that
  $\kF|_{X_{i}}\cong \kF_{i} \otimes  q^{\ast}\kL_{i}$.

  To prove this, we denote by $\varphi_{ij}:\kF_{j}|_{X_{ij}}
  \rightarrow \kF_{i}|_{X_{ij}}$ isomorphisms such that
  $\varphi_{ii}=\id$ and $\varphi_{ij}=\varphi_{ji}^{-1}$. On the triple overlaps
  $X_{ijk} = (T_{i}\cap T_{j}\cap T_{k})\times_{S}X$, the composition
  $\varphi_{ij}\circ\varphi_{jk}\circ\varphi_{ki}$ is an automorphism of
  $\kF_{i}|_{X_{ijk}}$. Because $\kF_{i}$ is a simple vector
  bundle on each fibre of $q$, this composition must be a multiple of the
  identity.
  More precisely, by Lemma \ref{L:simple} there exists $\alpha_{ijk} \in
  \kO(T_{i}\cap T_{j}\cap T_{k})$, such that
  $\varphi_{ij}\circ\varphi_{jk}\circ\varphi_{ki}
  = q^{\ast}(\alpha_{ijk})\cdot\id$.
  These functions $\alpha_{ijk}$ define a \v{C}ech cocycle $\alpha$ with values
  in the sheaf of abelian groups $\kO_{T}^{\ast}$.
  Let us denote its cohomology class by $\overline{\alpha}\in
  H^{2}(\kO^{\ast}_{T})$.

  The given sheaves $\kF_{i}$ glue to a sheaf $\kF$ on $X_{T}$
  if and only if $\alpha_{ijk}=1$ for all $i,j,k$.
  Moreover, there exist line bundles $\kL_{i}$ on $T_{i}$ such that the
  $\kF_{i}\otimes q^{\ast}\kL_{i}$ glue to a sheaf $\kF$
  on $X_{T}$ if and only if $\overline{\alpha}=1$ in
  $H^{2}(\kO^{\ast}_{T})$.

  It is convenient to use the language of twisted sheaves in this
  situation.
  The sheaves $\kF_{i}$ together with the isomorphisms
  $\varphi_{ij}$ define a $q^{\ast}\alpha$-twisted sheaf on $X_{T}$. We are
  going to construct an $\alpha$-twisted line bundle on $T$. The existence of it
  implies already $\overline{\alpha}=1$. More explicitly, we can form
  the tensor product of the given $q^{\ast}\alpha$-twisted sheaf with the pull
  back
  of the dual of this $\alpha$-twisted line bundle. This produces the desired
  (untwisted) sheaf $\kF$ on $X_{T}$.

  Let $\kL_{i}=\mathbb{L}_{T_{i}}(\kF_{i})$ and
  $\psi_{ij}=\mathbb{L}_{T_{ij}}(\varphi_{ij})$. Lemma \ref{L:functor} (a)
  implies that $\kL_{i}$ is a line bundle on $T_{i}$.
  Using Lemma \ref{L:functor} (b) and (c) we obtain
  \begin{align*}
    \psi_{ij}|_{T_{ijk}}\circ\psi_{jk}|_{T_{ijk}}\circ\psi_{ki}|_{T_{ijk}}
    &= \mathbb{L}_{T_{ij}}(\varphi_{ij})|_{T_{ijk}}\circ
    \mathbb{L}_{T_{jk}}(\varphi_{jk})|_{T_{ijk}}\circ
    \mathbb{L}_{T_{ki}}(\varphi_{ki})|_{T_{ijk}}\\
    &= \mathbb{L}_{T_{ijk}}(\varphi_{ij}|_{X_{ijk}})\circ
    \mathbb{L}_{T_{ijk}}(\varphi_{jk}|_{X_{ijk}})\circ
    \mathbb{L}_{T_{ijk}}(\varphi_{ki}|_{X_{ijk}})\\
    &= \mathbb{L}_{T_{ijk}}(\varphi_{ij}\varphi_{jk}\varphi_{ki}|_{X_{ijk}})\\
    &= \mathbb{L}_{T_{ijk}}
    \bigl(q^{\ast}(\alpha_{ijk})\cdot\id_{\kF_{i}|_{X_{ijk}}}\bigr)\\
    &= \alpha_{ijk}\cdot\id_{\kL_{i}|_{T_{ijk}}}\;.
  \end{align*}
  This shows that $\bigl(\{\kL_{i}\},\{\psi_{ij}\}\bigr)$ is an
  $\alpha$-twisted line bundle on $T$. This implies $\overline{\alpha}=1$
  which concludes the proof.
\end{proof}

\begin{proof}[Proof of Theorem \ref{T:main}]
  Because we assume $n$ and $d$ coprime, the functors $\mathbb{L}_{T}$
  exist. By Proposition \ref{P:sheaf}, the functor
  $\underline{\Mf}^{(n,d)}_{X/S}$ is a sheaf in the analytic topology.
  Theorem 6.4 and Remark 6.7 from \cite{KO} imply now the claim.
\end{proof}

\begin{remark}\label{R:bounded}
  If we fix $\kF\in\underline{\Mf}^{(n,d)}_{X/S}(T)$, we may define
  \[T_{m} = \{t\in T\mid H^{1}(\kF(m)_{t}) = 0\}\]
  for each integer $m$, i.e.\ $T\setminus T_{m}$ is the support
  of the coherent sheaf $R^{1}q_{\ast}(\kF(m))$.
  Because the fibres $X_{t}$ are curves, $T_{m}$ is the set over which $\kF$ is
  $(m+1)$-regular. The sets $T_{m}$ are open subsets of $T$ such that
  $T_{m}\subset T_{m+1}$ and $T=\bigcup_{m\in\mathbb{Z}} T_{m}$.
  Theorem \ref{T:vanishing} implies that there is an integer $m_{0}$, neither
  depending on $\kF$ nor on $T$, such that $T_{m_{0}} = T$.

  We can look at the open subfunctors
  $\underline{\Mf}^{(n,d,m)}_{X/S} \subset \underline{\Mf}^{(n,d)}_{X/S}$
  defined by the extra condition that $H^{1}(\kF(m)_{t}) = 0$ for all $t\in T$,
  i.e.\ $R^{1}q_{\ast}\kF(m)=0$. Our argument in the proof of
  Proposition \ref{P:sheaf} shows that each
  $\underline{\Mf}^{(n,d,m)}_{X/S}$ is a sheaf. Theorem \ref{T:vanishing}
  implies that $\underline{\Mf}^{(n,d,m)}_{X/S} = \underline{\Mf}^{(n,d)}_{X/S}$
  for all $m\ge m_{0}$.
\end{remark}

\begin{remark}\label{R:alternative}
  A proof of Proposition \ref{P:sheaf}, in which Theorem \ref{T:vanishing} is
  not used, could possibly be carried out as follows.
  Similar to the definition used before, we let
  \[\mathbb{L}_{T}(\kF) = \bigl(\det\left(\kA\right)\bigr)^{\otimes a}
  \otimes \bigl(\det\left(\kB\right)\bigr)^{\otimes b}\,,\]
  but this time we use $\kA=q_{\ast}(\kF|_{\Sigma_{T}})$ and
  $\kB=\mathbf{R}q_{\ast}(\kF)$.
  The key points to be verified are now that $\kA$ is locally free (basically
  because the section $\Sigma$ avoids the singularities of the fibres),
  that $\kB$ is a perfect complex and that $\lambda\cdot\id_{\kF}$ acts as
  $\lambda^{\chi(\kF)}\cdot\id$ on $\det(\kB)$.
\end{remark}

\begin{remark}
  Because local freeness is an open property, Theorem
  \ref{T:main} implies that the sub-functor
  $\underline{\breve{\Mf}}^{(n,d)}_{X/S}$ of $\underline{\Mf}^{(n,d)}_{X/S}$,
  which consists of families of vector bundles, is representable by an open
  subspace $\breve{\Mf}^{(n,d)}_{X/S}$ of $\Mf^{(n,d)}_{X/S}$. In particular,
  under the assumptions made in the introduction, the functor
  $\underline{\Pic}^{d}_{X/S} = \underline{\breve{\Mf}}^{(1,d)}_{X/S}$ is
  representable by a complex space $\Pic^{d}_{X/S}$.
  This functor is defined without sheafification; the elements of
  $\underline{\Pic}^{d}_{X/S}(T)$ are equivalence classes of line bundles on
  $X_{T}$.
\end{remark}

\begin{remark}
  If $S$ is a reduced point, $X$ an irreducible projective curve of
  arithmetic genus $1$, and $n>0$ and $d$ coprime integers, there is an
  isomorphism
  \[\Det:\breve{\Mf}^{(n,d)}_{X} \lar \Pic^{d}_{X}\,.\]
  The morphism $\Det$ is determined by the condition
  $\Det^{\ast}\kL \cong \det(\kP)$, where $\kL$ on $\Pic^{d}_{X}$ and $\kP$ on
  $\breve{\Mf}^{(n,d)}_{X}$ are universal sheaves. Note that, because $g=1$,
  for a vector bundle $V$ on $X$ we have $\deg(V)=\chi(V)$, so that indeed
  $\chi(V)=\chi(\det(V))$.

  It was shown by Atiyah \cite[Theorem 7]{At} if $X$ is smooth and for instance
   in \cite[Theorems 5.1.19 and 5.1.34]{BK-AMS} in the singular case,
  that the map $\Det$ is bijective. Because the tensor product with a line
  bundle of degree $d$ defines an isomorphism between $\Pic^{0}_{X}$ and
  $\Pic^{d}_{X}$ and because $\Pic^{0}_{X}$ is a group, $\Pic^{d}_{X}$ is
  smooth. This is sufficient to conclude from the bijectivity of $\Det$ that
  this morphism in fact is an isomorphism, see e.g.\
  \cite[Theorem 4.6.8]{Taylor}.
\end{remark}

\noindent
\textbf{Acknowledgements}. Both authors would like to thank Manfred Lehn for
discussing the proofs of Lemmas \ref{L:simple} and \ref{L:wellknown} and
Norbert Hoffmann for suggesting the construction in
Remark \ref{R:alternative}.
The work of the first-named author was supported by the DFG grant Bu-1866/2-1.

\end{document}